\documentclass{amsart}

\usepackage{graphicx, layout, appendix, subcaption}
\usepackage{fancyhdr}
\usepackage{amssymb, amsthm, amsmath, amscd, amsfonts,
mathrsfs, latexsym, mathtools}
\usepackage{placeins} % preamble
\usepackage{enumitem}
\usepackage{setspace}
\usepackage{changepage}
\usepackage{leftidx}
\usepackage{calc}
\usepackage{verbatim}
\usepackage{hyphenat}
\usepackage[
	setpagesize=false,
	colorlinks=true,
	linkcolor=blue,
	pdfencoding=auto,
	psdextra,
]{hyperref}
\usepackage{cleveref}
\usepackage[final]{microtype}
\usepackage{xcolor}
\hypersetup{
    colorlinks,
    linkcolor={red!50!black},
    citecolor={blue!50!black},
    urlcolor={blue!80!black}
}

\usepackage{ifthen}
\usepackage{amsxtra}
\usepackage{amstext}
\usepackage{stmaryrd}
\usepackage{pifont}
\usepackage[T1]{fontenc}
\usepackage{textcomp}
\usepackage{bbm}
\usepackage{tikz-cd}

\makeatletter
\def\section{\@startsection{section}{1}%
  \z@{.7\linespacing\@plus.2\linespacing}{.5\linespacing}%
  {\normalfont\scshape\centering}}
\def\subsection{\@startsection{subsection}{2}%
  \z@{.5\linespacing\@plus.15\linespacing}{-.5em}%
  {\normalfont\bfseries}}
\def\subsubsection{\@startsection{subsubsection}{3}%
  \z@{.5\linespacing\@plus.15\linespacing}{-.5em}%
  {\normalfont\itshape}}
\def\thm@space@setup{%
  \thm@preskip=.5\baselineskip\@plus.1\baselineskip
                             \@minus.2\baselineskip
  \thm@postskip=\thm@preskip
}
\makeatother
\setlist[enumerate,1]{label=(\arabic*), ref=(\arabic*)}
\setlist[enumerate,3]{label=(\roman*), ref=(\roman*)}

\theoremstyle{plain}
\newtheorem{thm}{Theorem}[section]
\newtheorem*{thm*}{Theorem}

\newtheorem{lem}[thm]{Lemma}

\newtheorem*{lem*}{Lemma}

\newtheorem{prop}[thm]{Proposition}

\newtheorem*{prop*}{Proposition}

\newtheorem*{cor*}{Corollary}

\newtheorem*{claim*}{Claim}

\newtheorem*{conj*}{Conjecture}

\theoremstyle{definition}
\newtheorem{defn}[thm]{Definition}
\newtheorem*{defn*}{Definition}

\newtheorem*{ques*}{Question}

\newtheorem{exa}[thm]{Example}

\newtheorem*{exa*}{Example}

\theoremstyle{remark}
\newtheorem{rmk}[thm]{Remark}

\newtheorem*{rmk*}{Remark}

\numberwithin{figure}{section}
\numberwithin{table}{section}
\numberwithin{equation}{section}

\def \CC {\mathbb{C}}

\def \PP {\mathbb{P}}
\def \QQ {\mathbb{Q}}
\def \RR {\mathbb{R}}

\def \ZZ {\mathbb{Z}}

\def \Ocal {\mathcal{O}}

\def \hbar {\bar{h}}

\DeclareMathOperator{\Aut}{Aut}

\DeclareMathOperator{\Pic}{Pic}
\DeclareMathOperator{\NS}{NS}

\DeclareMathOperator{\Int}{int}     % interior

\DeclareMathOperator{\Nef}{Nef}
\DeclareMathOperator{\Eff}{Eff}

\DeclareMathOperator{\Amp}{Amp}
\DeclareMathOperator{\Pos}{Pos}

\newcommand{\NEbar}{\overline{\mathrm{NE}}}

\usepackage{scalerel}
\usepackage{stackengine,wasysym}

\title[Nef Cones of Hilbert Schemes on Generalized Cayley K3]{Nef Cones of the Hilbert Schemes of Points on Generalized Cayley K3 Surfaces}
\author{Chiwon Yoon}
\address{Department of Mathematical Sciences, KAIST, 291 Daehak-ro, Yuseong-gu, Daejeon, 34141, Republic of Korea
}
\email{dbs7985@kaist.ac.kr}

\subjclass[2020]{ % https://mathscinet.ams.org/mathscinet/msc/pdfs/classifications2020.pdf
    14J28, % K3 surfaces and Enriques surfaces
    14C05% Parametrization (Chow and Hilbert schemes)
}
\keywords{Hilbert scheme, K3 surface, nef cone}

\begin{document}

\begin{abstract}
We study the nef cones of Hilbert schemes of points on generalized Cayley K3 surfaces $S_a$. For the Hilbert squares $S_a^{[2]}$ with $a=1,2$, we determine the nef and effective cones by combining Beauville involutions with the Bayer--Macr\`i wall description and explicit lattice calculations. In the Cayley case $a=1$, we further construct an explicit rational polyhedral fundamental domain for the action of the automorphism group on the nef effective cone. For arbitrary $a$ and $n\ge\lfloor a^2/4\rfloor+3$, we determine the nef and Mori cones using explicit divisor classes whose nefness follows from the Mukai lattice description, together with curve classes obtained from pencils on smooth curves.
\end{abstract}
\maketitle
%---------------------------------------------------------------------
\section{Introduction}\label{sec:intro}

Hilbert schemes of points on K3 surfaces are classical examples of higher-dimensional irreducible holomorphic symplectic manifolds. Let $S$ be a projective K3 surface and let $n\ge2$. Then $S^{[n]}$ is smooth and projective of dimension $2n$; see Fogarty~\cite{Fog68} and Beauville~\cite{Bea83b}. Its Picard group has the decomposition
\[
\Pic(S^{[n]}) \cong \Pic(S) \oplus \ZZ e,
\]
where $e=\tfrac12B$ and $B$ is the exceptional divisor of the Hilbert--Chow morphism~\cite{Fog73}. The second cohomology carries the Beauville--Bogomolov--Fujiki form~\cite{Bea83b,Fuj87}.

One of the main problems in the study of hyperk\"ahler varieties is to describe the nef cone. For K3 surfaces, Sterk~\cite{Ste85} proved that the nef cone is determined by walls orthogonal to $(-2)$-curves, and that the automorphism group acts on it with a rational polyhedral fundamental domain. In higher dimensions, Markman~\cite{Mar11,Mar13} described the nef and movable cones using monodromy, while Amerik and Verbitsky~\cite{AV15,AV17} developed the theory of monodromy birationally minimal classes. The nef cones of Hilbert schemes of points on K3 surfaces have also been studied by explicit lattice-theoretic and numerical methods. Hassett and Tschinkel~\cite{HT01,HT09} studied the ample and movable cones of holomorphic symplectic fourfolds of K3$^{[2]}$-type. Bayer, Hassett, and Tschinkel~\cite{BHT15} extended this analysis to projective K3$^{[n]}$-type varieties.

Bayer and Macr\`i~\cite{BM14a,BM14b} described the nef, movable, and effective cones of moduli spaces on K3 surfaces in terms of the Mukai lattice. Since $S^{[n]}$ is itself a moduli space of ideal sheaves on $S$, their results apply directly to Hilbert schemes of points. For Hilbert squares, we combine the resulting numerical wall criteria with Beauville involutions and explicit lattice calculations. For higher Hilbert schemes, we use the Bayer--Macr\`i description of the Mori cone.

The Cayley K3 surface is a determinantal quartic surface $S\subset\PP^3$ whose Picard lattice has rank two and intersection form
\[
\begin{pmatrix}
4 & 2 \\
2 & -4
\end{pmatrix}.
\]
It was considered by Cayley~\cite{Cay69} and later studied by Oguiso~\cite{Ogu12}. In the Cayley case, Lee~\cite{Lee25} determined the automorphism group of the Hilbert square $S^{[2]}$.

We consider the following generalization of the Cayley K3 surface. For each integer $a\ge1$, let $S_a$ be a projective K3 surface with $\NS(S_a)=\ZZ h_1\oplus\ZZ h_2$ and intersection form
\[
\begin{pmatrix}
4 & 2a \\
2a & -4
\end{pmatrix}.
\]
We call $S_a$ a generalized Cayley K3 surface. The case $a=1$ recovers the Cayley K3 surface. Lee~\cite{lee2026generalized} studied the automorphism groups of these K3 surfaces using generalized Fibonacci numbers.

Section~\ref{sec:Nef_cone} determines the nef and effective cones of $S_1^{[2]}$ and $S_2^{[2]}$, while Section~\ref{sec:FD} constructs an explicit rational polyhedral fundamental domain for the action of $\Aut(S_1^{[2]})\cong\ZZ_2*\ZZ_2*\ZZ_2$ on $\Nef^e(S_1^{[2]})$. The main result for the Hilbert square of the Cayley K3 surface is the following.
\begin{thm}[= Theorem~\ref{thm:nef_cone_of_S^[2]}]
The walls of the nef cone of $S_1^{[2]}$ are precisely the hyperplanes orthogonal to the classes in $\Aut(S_1^{[2]})\cdot e$. Moreover,
\[
\Eff(S_1^{[2]}) = \operatorname{Cone}\bigl(\Aut(S_1^{[2]})\cdot e\bigr).
\]
\end{thm}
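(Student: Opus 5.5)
We work in $\NS(S_1^{[2]})=\ZZ h_1\oplus\ZZ h_2\oplus\ZZ e$ with the Beauville--Bogomolov form $q$ restricted from the intersection form on $\NS(S_1)$, $q(e)=-2$, and $e\perp\NS(S_1)$. The plan is to compute the walls of the nef cone near $e$ via Bayer--Macr\`i, and then use the automorphism group (generated by Beauville involutions, as in Lee's description $\Aut(S_1^{[2]})\cong\ZZ_2*\ZZ_2*\ZZ_2$) to propagate them.

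\textbf{Step 1 (walls from Bayer--Macr\`i).} For $K3^{[2]}$-type, a class $D$ of positive square is nef iff $q(D,\cdot)\ge0$ on the positive cone and on the classes $s$ in the Mukai-lattice list: $s^2=-2$ with $q(s,e)=0$-type flops, $s$ of divisibility $2$ with $q(s)=-10$, and the Hilbert--Chow wall $e^\perp$ coming from $s$ with $q=-2$, $\mathrm{div}=2$. In practice, $\Mov(S_1^{[2]})$ is cut out by $e^\perp$ and $\Nef(S_1)$-type walls. The nef cone is then cut out inside $\Mov$ by the hyperplanes $\delta^\perp$ for $\delta=2x-ke$ with $q(\delta)=-10$, or equivalently by the orthogonals of the corresponding rational curve classes.

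Since $S_1$ contains no $(-2)$-curves (the equation $4x^2+4xy-4y^2=-2$ has no integer solution), $\Nef(S_1)$ is the closure of the positive cone. Its boundary rays are irrational, since $x^2+xy-y^2=0$ has irrational slope. I solve the Pell-type equations $4x^2+4xy-4y^2-2k^2=-10$, i.e.\ $2x^2+2xy-2y^2-k^2=-5$, and the $(-2)$-equations $2x^2+2xy-2y^2-k^2=-1$, and I classify their solutions into orbits.

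\textbf{Step 2 (Beauville involutions).} $S_1$ is a quartic without lines, so the Beauville involution $\iota$ acts on $\NS$ by the explicit reflection $D\mapsto -D+q(D,h_1)h_1$ composed with the swap $h_1\mapsto 3h_1-2e$ style formula, namely $\iota^*(h)=3h-4e$ and $\iota^*(e)=2h-3e$ for $h=h_1$. Similarly, the second quartic polarization $h'$ (image of $h_1$ under $\Aut(S_1)$) gives another involution, and the three generating involutions are conjugates. I check that each generator sends $e$ to a class $\iota^*e$ with $q=-2$, and that $\iota^*(e^\perp)$ is a wall of the nef cone adjacent to the chamber containing $h-\epsilon e$. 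Nefness of $\Nef(S_1^{[2]})$ under $\Aut$ then shows that every $g^*(e)^\perp$ is a wall.

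\textbf{Step 3 (no other walls; main obstacle).} I must show that every solution class $\delta$ from Step 1 that actually bounds the ample cone lies in $\Aut\cdot e$ (up to the numerical identification of the wall). My approach is to take the explicit polyhedral cone $\Pi$ bounded by $e^\perp$ and the walls $\iota_j^*e^\perp$, which will be the fundamental domain of Section~\ref{sec:FD}. For $\Pi$, I verify by finite lattice computation that no Bayer--Macr\`i class $\delta$ has $\delta^\perp$ meeting the interior of $\Pi$. This reduces to bounding $x,y,k$ using positivity of $q$ on the extremal rays of $\Pi$, and then checking finitely many solutions. The translates $g\Pi$ then tile the ample cone union its boundary. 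They cover the nef cone because $\Aut$ acts on the nef cone with a fundamental domain by the Markman/Kawamata--Morrison cone theorem, together with the tiling argument of Sterk type. This finite check is where I expect the real work.

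\textbf{Step 4 (effective cone).} The class $e$ is effective ($B=2e$), so $\operatorname{Cone}(\Aut\cdot e)\subset\Eff$. Conversely, $\Eff$ is dual to the movable-curve cone. Since $S_1$ has no $(-2)$-curves and no isotropic or Lagrangian-fibration classes, $\Mov$ equals $\Nef$ up to $\Aut$. The pseudo-effective cone is therefore generated by the positive cone together with the prime exceptional divisors. By Markman, these are the $q=-2$ classes whose orthogonals are walls of $\Mov$, and these are $\Aut\cdot e$ by Step 3. The positive cone lies in the closure of the cone on this orbit, since the orbit accumulates on the boundary of the positive cone. Hence $\Eff=\operatorname{Cone}(\Aut\cdot e)$.
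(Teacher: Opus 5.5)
\textbf{The gap is in Step~3, which carries the whole content of the theorem.}

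Your cone $\Pi$ is not a fundamental domain as described. A cone bounded only by $e^\perp$ and the hyperplanes $(\iota_j^*e)^\perp$ contains $\Nef(X)$. A fundamental domain also needs facets that are not nef walls and are paired by the generators. In the paper these are $\ell_1=x-z\ge0$, $\ell_2=z\ge0$, $\ell_3=3x+2y\ge0$, whose signs are reversed by $\iota_1^*$, $\iota_0^*$ and $\iota_1^*\iota_2^*\iota_1^*$.

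Your planned finite Bayer--Macr\`i check only gives the easy inclusion $\Pi\subseteq\Nef(X)$. That inclusion needs no lattice search: the six generating rays are images of $h_1$ and $h_1+h_2$ under automorphisms, or positive combinations of such images.

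The hard point is that the translates $g\Pi$ exhaust $\Nef(X)$. This does not follow from the cone theorem. Markman or Amerik--Verbitsky only give the existence of \emph{some} rational polyhedral fundamental domain. Knowing that $\Pi\subseteq\Nef(X)$ and that its translates have disjoint interiors does not imply that the orbit of $\Pi$ covers the nef cone. Exhaustion is equivalent to the absence of walls outside $\Aut(X)\cdot e$, which is what you are proving. To get it you would need either a Poincar\'e-polyhedron verification (non-wall facets paired, cycle conditions at vertices) or a descent argument. Neither is in the proposal.

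\textbf{How the paper closes this.} $\NS(X)$ has no class of square $-10$, since $2(x^2+xz-z^2)-y^2=-5$ is impossible mod~$8$. Hence $\mathrm{Mov}(X)=\Nef(X)$. Contrary to Step~1, $\mathrm{Mov}(X)$ is not cut out by $e^\perp$ alone: $h_1-\tfrac{7}{5}e\in\Pos(X)$ pairs negatively with $\iota_0^*e=2h_1-3e$. By Bayer--Macr\`i, every wall is $\rho^\perp$ with $q(\rho)=-2$ and $\rho$ prime exceptional.

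Now take a prime exceptional primitive $\xi=L+ye$ whose ray is not generated by $G\cdot e$, where $G=\langle\iota_k\rangle$, with $(e,\xi)=-2y$ minimal. The identity $(e,\iota_k^*\xi)-(e,\xi)=2(F_k\cdot L+4y)$ then forces $F_k\cdot L\ge-4y$ for all $k$. So the slope $z/x$ must avoid intervals whose $\psi$-images are $\beta^{4k}[A,B]$. These cover $(0,\infty)$ because $B/A=17+12\sqrt2\ge\beta^4$, a contradiction.

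Alternatively, you can rescue your tiling by running the reduction of Section~\ref{sec:FD} on the $\Aut(X)$-invariant cone $\mathcal K$ cut out by all $(g^*e)^\perp$. In region $(i)$ apply $A_i$: the height $2x+y$ drops by $-2\ell_i>0$, stays positive, and cannot drop forever because $q$ is preserved. This gives $\mathcal K\cap\Pos(X)\subseteq\bigcup_g g\Pi\subseteq\Nef(X)$.

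\textbf{Step~4 also falls short.} Duality with movable curves yields only $\overline{\Eff}(X)=\overline{\operatorname{Cone}(\Aut(X)\cdot e)}$. For $\Eff(X)$ itself you need two further inputs:
\begin{enumerate}
\item Bayer--Macr\`i's description of $\Eff(X)$ as generated by prime exceptional classes and rational classes of $\overline{\Pos(X)}$.
\item The absence of nonzero isotropic vectors in $N^1(X)_\QQ$. This is a statement about $X$, proved by a mod-$5$ argument, not about $S_1$. It fails for $a=2$, where $h_1+h_2\pm2e$ is isotropic although $\NS(S_2)$ has no isotropic class.
\end{enumerate}
Also, the rays of $\Aut(X)\cdot e$ accumulate only on a proper closed subset of $\partial\Pos(X)$. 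So ``the orbit accumulates on the boundary'' does not by itself give $\Pos(X)\subseteq\overline{\operatorname{Cone}(\Aut(X)\cdot e)}$; the paper obtains this from Denisi's theorem for $\rho=3$.
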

For $S_2^{[2]}$, the nef walls are likewise given by the hyperplanes orthogonal to $\Aut(S_2^{[2]})\cdot e$. As described in Theorem~\ref{thm:nef-cone-a2}, its effective cone is generated by $\Aut(S_2^{[2]})\cdot e$ together with the rational isotropic classes in $\overline{\Pos(S_2^{[2]})}$.

For arbitrary $a$, Section~\ref{sec:higher-2} treats $S_a^{[n]}$ in the range $n\ge\lfloor a^2/4\rfloor+3$. We determine both the nef and Mori cones. The nef cone is the closure of the cone generated by the classes $D_{k,t}$, while the Mori cone is the closure of the cone generated by the Hilbert--Chow curve class $C_0$ together with the curve classes $(E_{k,t})_{[n]}$ obtained from pencils on smooth curves. We establish the nefness of the classes $D_{k,t}$ using the Bayer--Macr\`i description of the Mori cone in terms of the Mukai lattice.
\begin{thm}[= Theorem~\ref{thm:nef-cone-large-n}]
Let $a\ge1$ and $n\ge\lfloor a^2/4\rfloor+3$. Then
\[
\NEbar(S_a^{[n]})
=
\overline{\operatorname{Cone}\bigl\langle
C_0,\,(E_{k,t})_{[n]}
\mid k\in\ZZ,\;0\le t\le a-1
\bigr\rangle}.
\]
Moreover,
\[
\Nef(S_a^{[n]})
=
\overline{\operatorname{Cone}\bigl\langle
D_{k,t}\mid k\in\ZZ,\;0\le t\le a-1
\bigr\rangle}.
\]
\end{thm}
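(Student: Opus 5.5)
Both equalities will follow from a single sandwich. First I will show that every $D_{k,t}$ is nef. Then I will show that every curve class $C_0$ and $(E_{k,t})_{[n]}$ is effective. Finally I will check that the two families are dual to each other in $N^1(S_a^{[n]})\cong \NS(S_a)\oplus\ZZ e$. Once that is in place we get
\[
\operatorname{Cone}\langle C_0,(E_{k,t})_{[n]}\rangle\subseteq \NEbar(S_a^{[n]})=\Nef(S_a^{[n]})^\vee\subseteq \operatorname{Cone}\langle D_{k,t}\rangle^\vee .
\]
If the dual of the closed cone spanned by the $D_{k,t}$ is exactly the closed cone spanned by $C_0$ and the $(E_{k,t})_{[n]}$, every inclusion is an equality. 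Dualizing once more then gives the nef cone statement.

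\textbf{Step 1: nefness of $D_{k,t}$.} I will use the Bayer--Macr\`i description of the Mori cone. Write $v=(1,0,1-n)$ for the Mukai vector of the ideal sheaves. For a divisor written as $D=\theta(w)$ with $w\in v^\perp$, nefness amounts to a finite list of inequalities. For every class $s$ in the algebraic Mukai lattice with $s^2\ge -2$ and $0\le (s,v)\le v^2/2=n-1$, the wall $v^\perp\cap s^\perp$ must not separate $D$ from the ample cone. Concretely, I plan to write $s=(r,c h_1+d h_2,\chi)$ and translate the condition into the statement that $D$ pairs non-negatively with the corresponding curve classes.

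The point of the hypothesis $n\ge\lfloor a^2/4\rfloor+3$ is to make $n$ large compared with the discriminant data of $\NS(S_a)$, whose determinant is $-16-4a^2$. In this regime the only walls that matter are the ones coming from rank-zero and rank-one classes $s$. These are the Brill--Noether-type walls attached to line bundles $L$ on $S_a$. I will show that the remaining candidate classes force $(s,v)$ out of the range $[0,n-1]$; this is essentially a completing-the-square argument. For the walls that survive, I expect $D_{k,t}$ to be built as $\tilde L-\mu e$, where $L$ runs over the boundary rays of $\Nef(S_a)$ together with their images under $\Aut(S_a)$, indexed by $k$ and a residue $t$ modulo $a$. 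Nefness then reduces to checking a finite inequality that is periodic in $k$ thanks to the automorphisms.

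\textbf{Step 2: effective curves.} The class $C_0$ is the Hilbert--Chow curve, a fibre over a generic point of the diagonal. For $(E_{k,t})_{[n]}$, I take a smooth curve $C$ in a suitable linear system $|L_{k,t}|$ that carries a pencil $g^1_n$. Its fibres give a $\PP^1\subset C^{[n]}\subset S_a^{[n]}$ whose class is $L_{k,t}-\frac{d}{?}e$ in the standard normalization, where the $e$-coefficient is determined by the genus through Riemann--Hurwitz. The existence of the $g^1_n$ follows from Brill--Noether theory, since $n\ge \operatorname{gon}(C)$; the bound on $n$ should guarantee this. Smoothness of $C$ comes from Bertini, or from Saint-Donat when $L$ is nef and big.

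\textbf{Step 3: duality.} I will compute the pairings $D_{k,t}\cdot (E_{k',t'})_{[n]}$ and $D_{k,t}\cdot C_0$. The goal is to show that consecutive $D$'s are both orthogonal to one common $E$, so the two families trace out dual polygonal boundaries accumulating at the isotropic rays of $\NS(S_a)$. This is where the closures enter.

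I expect the main obstacle to be Step 1: ruling out every Bayer--Macr\`i wall other than the expected ones, uniformly in $a$. For this I would first parametrize the solutions of $s^2\ge-2$ using the Pell-type structure of the form $4x^2+4axy-4y^2$. I would then use the automorphism action to reduce $k$ to a bounded range before bounding $(s,v)$.
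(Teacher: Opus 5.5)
Your overall skeleton matches the paper's: nefness of $D_{k,t}$ via Bayer--Macr\`i, effectivity of the pencil curves, then the duality sandwich. But Step~1 carries the whole argument, and it has no working mechanism; the reduction you propose is false.

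Classes $s$ of rank $\ge2$ are not excluded by the range condition. For $a=1$, $n=5$ (so $N=4$), the class $w=(2,-(2h_1+h_2),5)$ has $w^2=0$ and $(v,w)=3\in[0,4]$, and $\langle v,w\rangle$ is hyperbolic. Its hyperplane contains the positive class $\tfrac{13}{20}(2h_1+h_2)-e$, which pairs positively with $C_0$. Such hyperplanes genuinely meet the relevant part of $\Pos(X)$ and must be shown to lie beyond $D_{k,t}$. Moreover, even the rank-$0$ and rank-$1$ classes form infinite families, so periodicity in $k$ does not reduce nefness to a finite check.

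What is missing is a uniform positivity estimate over all admissible $w$ of every rank. The paper writes $w=(r,-pE_{k,t}-qE_{k,t+1},rN-m)$ in the integral basis $E_{k,t},E_{k,t+1}$, where $E_{k,t}\cdot E_{k,t+1}=\gamma_t+\gamma_{t+1}$. It gets $p+q>0$ from the pullback of the ample class $F_k+F_{k+1}$, and then proves
\[
(p+q)\,D_{k,t}\cdot\theta_v^\vee(w)=Nd^2+md+p^2+q^2+\tfrac{w^2}{2}\ \ge\ N|d|(|d|-1)+p^2+q^2-1\ \ge0,\qquad d=p+q-r.
\]
In the equality case, divisibility of all squares in $\NS(S_a)$ by $4$ forces $w=v(\Ocal_{S_a}(-E_{k,t}))$ or $w=v(\Ocal_{S_a}(-E_{k,t+1}))$. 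In Step~3 you need $D_{k,t}$ to be strictly positive on every other pencil class, so that $D_{k,t}^\perp$ cuts out exactly the two-pencil facet. This equality case supplies that directly.

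Several other points are off.
\begin{enumerate}
\item The hypothesis $n\ge\lfloor a^2/4\rfloor+3$ plays no role in nefness; the estimate above holds for all $n\ge2$. It is used only for two things. First, it produces a $g^1_n$ on a smooth $C\in|E_{k,t}|$ of genus $\gamma_t=3+2t(a-t)$ by Brill--Noether existence, since $\rho=2N-\gamma_t\ge1$. Second, it ensures $|(v,w_{k,u})|=|N-\gamma_u|\le N$. Lemma~\ref{lem:pencil-mukai} then identifies the pencil class with $\theta_v^\vee(v(\Ocal_{S_a}(-E_{k,t})))$, that is, with $E_{k,t}-\frac{N+\gamma_t}{2N}e$.
\item Your guess that $D_{k,t}$ comes from boundary rays of $\Nef(S_a)$ and their $\Aut(S_a)$-images cannot be right. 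Those rays are the irrational isotropic rays $u_\pm$, and they are $\Aut(S_a)$-stable. The actual class is $D_{k,t}=P_{k,t}-e$, where $P_{k,t}\cdot E_{k,t}=N+\gamma_t$ and $P_{k,t}\cdot E_{k,t+1}=N+\gamma_{t+1}$.
\item For the closure step you must show that $u_\pm$ lie in $\overline{\operatorname{Cone}\langle D_{k,t}\rangle}$, because they occur as generators of the dual of the curve cone. The paper does this by writing $P_{k,0}$ as a positive combination of $F_k$ and $F_{k+1}$.
\end{enumerate}
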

\subsection*{Acknowledgments}
The author would like to thank Yongnam Lee for detailed comments on the draft of this paper, and Wanmin Liu for helpful comments on the references.
This work was supported by the Institute for Basic Science (IBS-R032-D1).

\section{Preliminaries}\label{sec:pre}

\subsection{K3 surfaces and their Hilbert schemes}\label{subsec:K3-Hilbert}

Throughout, all K3 surfaces are projective and defined over $\CC$; we refer to~\cite{Huy16} for standard facts on K3 surfaces and their lattices.

For each integer $a\ge1$, let $L_a$ be the rank-two even hyperbolic lattice with Gram matrix
\[
\begin{pmatrix}
4 & 2a \\
2a & -4
\end{pmatrix}.
\]
By Nikulin's theorem on primitive embeddings~\cite{Nik80}, $L_a$ admits a primitive embedding into the K3 lattice $\Lambda_{\mathrm{K3}}\cong U^{\oplus3}\oplus E_8(-1)^{\oplus2}$. By the surjectivity of the period map, there exists a projective K3 surface $S_a$ with $\NS(S_a)\cong L_a$. We fix a basis $(h_1,h_2)$ with the Gram matrix above such that $h_1$ is ample.

Every square in $L_a$ is divisible by $4$, so $L_a$ contains no $(-2)$-classes. Hence the ample cone of $S_a$ is the component of the positive cone containing $h_1$. The form $x^2+axy-y^2$ has discriminant $a^2+4$, which is not a perfect square for $a\ge1$, so $L_a$ contains no nonzero isotropic classes.

\begin{defn}[Beauville--Bogomolov--Fujiki form {\cite{Bea83b,Fuj87}}]\label{def:BBF}
Let $X$ be an irreducible holomorphic symplectic manifold of complex dimension $2n$. There exists a primitive integral quadratic form
\[
q_X\colon H^2(X,\ZZ)\longrightarrow\ZZ
\]
of signature $(3,b_2(X)-3)$, called the Beauville--Bogomolov--Fujiki (BBF) form. Its associated bilinear form is defined by
\[
(D,D')=\tfrac12\bigl(q_X(D+D')-q_X(D)-q_X(D')\bigr),
\qquad D,D'\in H^2(X,\ZZ).
\]
\end{defn}

Let $n\ge2$ and let $X=S^{[n]}$ be the Hilbert scheme of points on a K3 surface $S$. With respect to the BBF form, there is an orthogonal decomposition
\[
H^2(X,\ZZ) \cong H^2(S,\ZZ)\oplus\ZZ e,
\]
where $e=\tfrac12B$ and $B$ is the exceptional divisor of the Hilbert--Chow morphism $S^{[n]}\to S^{(n)}$. For a divisor $D$ on $S$, we write $D^{[n]}$ for the pullback to $S^{[n]}$ of the divisor class on $S^{(n)}$ induced by $D$. We also identify $N^1(S)$ with its image in $N^1(S^{[n]})$ when no confusion can arise. We use $\NS(X)$ for the integral N\'eron--Severi lattice and set $N^1(X)_\QQ:=\NS(X)\otimes\QQ$ and $N^1(X)_\RR:=\NS(X)\otimes\RR$.

The BBF form restricts to the intersection form on $H^2(S,\ZZ)$ and satisfies $q_X(e)=-2(n-1)$. We write $q=q_X$ when $X$ is clear. We use the BBF pairing to identify $N^1(X)_\QQ$ with $N_1(X)_\QQ$, and similarly over $\RR$. In particular, for $X=S_a^{[n]}$ and the basis $(h_1,e,h_2)$,
\[
q(xh_1+ye+zh_2) = 4x^2+4axz-4z^2-2(n-1)y^2.
\]
For a hyperk\"ahler fourfold $X$, the polarized Fujiki relation takes the form $D\cdot H^3=c_Xq_X(H)(D,H)$, where $c_X>0$.

Throughout, ${}^\perp$ denotes orthogonality with respect to the relevant pairing: the BBF form on $H^2(X,\RR)$ and $N^1(X)_\RR$, and the Mukai pairing on the Mukai lattice. Via the BBF identification $N^1(X)_\RR\cong N_1(X)_\RR$, we use the same notation for the corresponding annihilators with respect to the divisor--curve pairing.

We denote by $C_0$ the class of a line in a fiber of the Hilbert--Chow morphism. The relative Mori cone is $\NEbar(X/S^{(n)})=\RR_{\ge0}C_0$. A choice of a $g^1_n$ on a smooth curve $C\subset S$ determines a curve $C_{[n]}\subset S^{[n]}$. We use the same notation $C_{[n]}$ for its numerical class in $N_1(S^{[n]})_\RR$. The corresponding intersection numbers are as follows:
\[
\begin{array}{c|cc}
 & D^{[n]} & B \\
\hline
C_{[n]} & C\cdot D & 2g(C)-2+2n \\
C_0 & 0 & -2
\end{array}.
\]

\subsection{Cones and the cone conjecture}\label{subsec:Cone_conjecture}

For a projective irreducible holomorphic symplectic manifold $X$, let $\Pos(X)$ be the connected component of
\[
\{x\in N^1(X)_\RR\mid q_X(x)>0\}
\]
containing $\Amp(X)$. The restriction of the BBF form to $N^1(X)_\RR$ has signature $(1,\rho(X)-1)$. The closure $\overline{\Pos(X)}$ is self-dual with respect to the BBF form. In particular, if $x\in\Pos(X)$ and $0\ne y\in\overline{\Pos(X)}$, then $(x,y)>0$.

Let $\Eff(X)$ denote the cone generated by effective divisor classes, and let $\overline{\Eff}(X)$ denote its closure, the pseudo-effective cone. Set
\[
\Nef^e(X):=\Nef(X)\cap\Eff(X),\qquad
\Nef^+(X):=\operatorname{conv}\bigl(\Nef(X)\cap N^1(X)_\QQ\bigr).
\]
The inclusion $\Nef^e(X)\subseteq\Nef^+(X)$ is noted in~\cite[Remark~1.4]{markman2015proof}.

A reduced irreducible divisor $E$ with $q_X(E)<0$ is called prime exceptional. For brevity, we call a ray spanned by a prime exceptional divisor a prime exceptional ray.

The Morrison--Kawamata cone conjecture predicts a rational polyhedral fundamental domain for the action of $\Aut(X)$ on $\Nef^e(X)$. For projective simple hyperk\"ahler manifolds with $\rho(X)\ge3$ and $b_2(X)\neq5$, Amerik and Verbitsky~\cite[Theorem~5.6]{AV17} proved that $\Aut(X)$ admits a finite polyhedral fundamental domain on $\Nef^+(X)$.

\subsection{Beauville involution}\label{subsec:Beauville_involution}

\begin{defn}[Beauville involution]
Let $S$ be a K3 surface and $X=S^{[2]}$. A Beauville involution on $X$ is a non-symplectic involution $\iota$ whose invariant lattice $H^2(X,\ZZ)^{\iota^*}$ has rank one and is generated by an ample class $D=H-e$ of square $2$, where $H$ is a very ample divisor of square $4$ on $S$.
Its action on $H^2(X,\ZZ)$ is
\[
\iota^*(x)=-x+\frac{2(x,D)}{q(D)}D,
\]
so $\iota^*$ fixes $D$ and acts as $-1$ on $D^\perp$.
\end{defn}

\begin{rmk}
The very ample divisor $H$ embeds $S$ as a quartic surface in $\PP^3$. If this quartic contains no line, the Beauville involution admits the following geometric realization: for two general points $p,q\in S$, the line $\overline{pq}\subset\PP^3$ meets $S$ in two further points $p',q'$, and the correspondence $(p,q)\mapsto(p',q')$ defines a regular automorphism of $S^{[2]}$~\cite{Bea83a}.
\end{rmk}

\subsection{Nef and Mori cones via the Mukai lattice}
We use Bayer--Macr\`i's Mukai lattice description of the nef and Mori cones of moduli spaces on K3 surfaces, based on Bridgeland wall-crossing~\cite{Bri07,Bri08,BM14a,BM14b}. The Hilbert scheme $S^{[n]}$ is the moduli space of ideal sheaves $I_Z$, where $Z\subset S$ is a length-$n$ subscheme. Set $X=S^{[n]}$ and $v:=v(I_Z)=(1,0,1-n)$. Write
\[
H^*_{\mathrm{alg}}(S,\ZZ) = H^0(S,\ZZ)\oplus\NS(S)\oplus H^4(S,\ZZ)
\]
for the algebraic Mukai lattice. For Mukai vectors $u=(r,c,s)$ and $u'=(r',c',s')$, we use the pairing
\[
(u,u')=c\cdot c'-rs'-r's.
\]
A class is called spherical if it has square $-2$ and is called isotropic if it has square $0$. Set $v^\perp:=\{u\in H^*_{\mathrm{alg}}(S,\ZZ)\mid (u,v)=0\}$, the orthogonal complement of $v$ in the algebraic Mukai lattice.

The Mukai morphism gives an isometry
\[
\theta_v\colon v^\perp\longrightarrow\NS(X),
\]
with
\[
\theta_v(0,-L,0)=L^{[n]}\quad\text{for }L\in\NS(S), \qquad \theta_v(1,0,n-1)=-e.
\]
We use the same notation for its $\QQ$-linear extension $v^\perp\otimes\QQ\to N^1(X)_\QQ$.

Let
\[
\theta_v^\vee\colon \bigl(H^*_{\mathrm{alg}}(S,\ZZ)/\ZZ v\bigr)\otimes\QQ \longrightarrow N_1(X)_\QQ
\]
denote the dual Mukai morphism, characterized by
\begin{equation}\label{eq:dual-mukai}
D\cdot\theta_v^\vee(w)=\bigl(\theta_v^{-1}(D),w\bigr) \qquad (D\in N^1(X)_\QQ).
\end{equation}
This is well-defined modulo $\QQ v$ because $\theta_v^{-1}(D)\in v^\perp\otimes\QQ$.
The Hilbert--Chow curve class satisfies $C_0=\theta_v^\vee(0,0,-1)$.

\begin{lem}\label{lem:pencil-mukai}
Let $C\subset S$ be a smooth curve admitting a $g^1_n$, and let $C_{[n]}\subset S^{[n]}$ be the associated curve. Then
\[
C_{[n]}=\theta_v^\vee\bigl(v(\Ocal_S(-C))\bigr).
\]
\end{lem}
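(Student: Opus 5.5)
Since $N_1(S^{[n]})_\QQ$ is finite-dimensional and the divisor--curve pairing is perfect, a class in $N_1(S^{[n]})_\QQ$ is determined by its intersection numbers with a spanning set of $N^1(S^{[n]})_\QQ$. I will therefore compare the two sides of the claimed identity against the generators $D=L^{[n]}$ ($L\in\NS(S)$) and $D=e$. On the left I use the intersection table above; on the right I use the defining property \eqref{eq:dual-mukai} together with the explicit preimages $\theta_v^{-1}(L^{[n]})=(0,-L,0)$ and $\theta_v^{-1}(e)=-(1,0,n-1)$.

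First I compute the Mukai vector. Set $w=v(\Ocal_S(-C))=\operatorname{ch}(\Ocal_S(-C))\sqrt{\operatorname{td}_S}=(1,-C,\tfrac{C^2}{2}+1)$. Since $C$ is smooth on a K3 surface, adjunction gives $C^2=2g-2$, so $w=(1,-C,g)$.

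Next I pair $w$ with the two types of preimage.
\begin{itemize}
\item For $L^{[n]}$: the pairing is $((0,-L,0),(1,-C,g))=L\cdot C-0-0=L\cdot C$. This matches $C_{[n]}\cdot L^{[n]}=C\cdot L$ from the table.
\item For $e$: the pairing is $(-(1,0,n-1),(1,-C,g))=-(0-g-(n-1))=g+n-1$. The table gives $C_{[n]}\cdot B=2g-2+2n$, so $C_{[n]}\cdot e=\tfrac12 B\cdot C_{[n]}=g-1+n$. These agree.
\end{itemize}

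Because the classes $L^{[n]}$ and $e$ span $N^1(S^{[n]})_\QQ$, the two curve classes coincide.

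The right-hand side is well-defined modulo $\QQ v$, and this causes no trouble here: $(\theta_v^{-1}(D),v)=0$ for every $D$, so adding a multiple of $v$ to $w$ does not change any of the pairings.

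The main obstacle is keeping the sign conventions consistent. These include the sign in $\theta_v(1,0,n-1)=-e$, the normalization $e=\tfrac12B$, and the table value $C_{[n]}\cdot B=2g-2+2n$. A useful sanity check is the Hilbert--Chow curve: with the same conventions, $\theta_v^\vee(0,0,-1)$ gives $e\cdot C_0=(-(1,0,n-1),(0,0,-1))=-1$, which matches $B\cdot C_0=-2$. This confirms that the conventions are coherent.
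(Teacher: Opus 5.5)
Your proof is correct and is essentially the paper's argument. Both compute $v(\Ocal_S(-C))=(1,-C,g(C))$ by adjunction, pair it with $\theta_v^{-1}(L^{[n]})=(0,-L,0)$ and $\theta_v^{-1}(e)=(-1,0,1-n)$ to recover $L\cdot C$ and $n+g(C)-1$ from the intersection table, and conclude because the classes $L^{[n]}$ and $e$ span $N^1(S^{[n]})_\QQ$.
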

\begin{proof}
By adjunction, $v(\Ocal_S(-C))=(1,-C,g(C))$. For every $D\in N^1(S)_\QQ$, equation~\eqref{eq:dual-mukai} and the formulas above for $\theta_v$ give
\[
\begin{aligned}
D^{[n]}\cdot\theta_v^\vee\bigl(v(\Ocal_S(-C))\bigr)
&=\bigl((0,-D,0),(1,-C,g(C))\bigr)=D\cdot C,\\
e\cdot\theta_v^\vee\bigl(v(\Ocal_S(-C))\bigr)
&=\bigl((-1,0,1-n),(1,-C,g(C))\bigr)=n+g(C)-1.
\end{aligned}
\]
These are precisely the intersection numbers of $C_{[n]}$. Since the classes $D^{[n]}$, together with $e$, span $N^1(X)_\QQ$, the two curve classes are equal.
\end{proof}

\begin{thm}[Bayer--Macr\`i {\cite[Theorems~12.1 and~12.2]{BM14b}}]\label{thm:BM-nef-mori}
The nef cone of $X$ is the chamber in $\overline{\Pos(X)}$ cut out by the hyperplanes
\[
\theta_v\bigl(v^\perp\cap w^\perp\bigr),
\]
where $0\ne w\in H^*_{\mathrm{alg}}(S,\ZZ)$ satisfies
\[
w^2\ge-2, \qquad 0\le (v,w)\le n-1.
\]
Dually, for an ample divisor $H$ on $X$, the BBF identification realizes $\NEbar(X)$ as the cone generated by $\overline{\Pos(X)}$ together with the classes $\theta_v^\vee(w)$ satisfying
\[
w^2\ge-2, \qquad |(v,w)|\le n-1, \qquad \theta_v^\vee(w)\cdot H>0.
\]
\end{thm}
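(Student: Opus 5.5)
The statement is a specialization of \cite[Theorems~12.1 and~12.2]{BM14b}, so the plan is to translate their general formulation into the present normalization rather than to reprove it. Recall the setting of Bayer--Macr\`i: for a primitive Mukai vector $v$ with $v^2>0$ and a $v$-generic stability condition $\sigma$, the moduli space $M_\sigma(v)$ is a hyperk\"ahler manifold of K3$^{[n]}$-type, and the Positivity Lemma gives a map $\ell\colon\operatorname{Stab}^\dagger(S)\to N^1(M_\sigma(v))_\RR$. This map sends the chamber of $\sigma$ into the ample cone. Their Theorem~12.1 states that $\NEbar(M)$ is generated by $\overline{\Pos}$ together with the classes $\theta_v^\vee(w)$ for $w$ with $w^2\ge-2$ and $|(v,w)|\le v^2/2$, taken with the sign that makes them positive on an ample class. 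Their Theorem~12.2 is the dual statement for the nef cone.

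\textbf{Specializing to the Hilbert scheme.} I would take $v=v(I_Z)=(1,0,1-n)$. With the pairing $(u,u')=c\cdot c'-rs'-r's$ this gives $v^2=-2(1-n)=2n-2$, hence $v^2/2=n-1$. This matches the bound $|(v,w)|\le n-1$ in the statement. Next, $S^{[n]}=M_\sigma(v)$ for $\sigma$ in the Gieseker chamber (large volume limit). The isometry $\theta_v$ and its dual $\theta_v^\vee$ agree with Bayer--Macr\`i's up to a global sign. This is fixed by the normalizations $\theta_v(0,-L,0)=L^{[n]}$ and $\theta_v(1,0,n-1)=-e$, together with the computation $C_0=\theta_v^\vee(0,0,-1)$, which gives $B\cdot C_0=-2$. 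Lemma~\ref{lem:pencil-mukai} provides an independent check that the sign of $\theta_v^\vee$ is correct.

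\textbf{Passing between the Mori and nef cones.} For the Mori cone statement, the class $w$ and $-w$ give the same hyperplane but opposite curve classes. The condition $\theta_v^\vee(w)\cdot H>0$ therefore selects the effective representative. For the nef cone, I would dualize: $\Nef(X)=\NEbar(X)^\vee$. Dualizing $\overline{\Pos}$ returns $\overline{\Pos}$, since it is self-dual. Each extra generator $\theta_v^\vee(w)$ cuts out a half-space whose boundary hyperplane is $\{D : (\theta_v^{-1}(D),w)=0\}=\theta_v(v^\perp\cap w^\perp)$ by \eqref{eq:dual-mukai}. So $\Nef(X)$ is the chamber of $\overline{\Pos(X)}$ bounded by these hyperplanes that contains $H$.

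\textbf{Reducing the range of $(v,w)$.} Replacing $w$ by $-w$ leaves $w^2$ and the hyperplane unchanged and flips the sign of $(v,w)$. We may therefore restrict to $0\le (v,w)\le n-1$ in the nef statement. Adding multiples of $v$ to $w$ does not change $\theta_v^\vee(w)$, so any choice of representative is harmless.

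\textbf{Main obstacle.} The genuine content, namely that these are the only walls, is Bayer--Macr\`i's classification of walls via rank-two hyperbolic lattices $\mathcal{H}\ni v$: divisorial contractions arise from spherical or isotropic $w$ with $(v,w)$ small, and flops arise from $w^2\ge-2$ with $|(v,w)|\le v^2/2$. This is quoted, not reproved here. The delicate step in the plan is the bookkeeping of sign conventions between their $\theta_v$ and ours, which I would verify on the two test classes $C_0$ and $C_{[n]}$ above.
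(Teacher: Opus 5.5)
Your proposal is correct and matches the paper, which gives no independent proof but simply quotes \cite[Theorems~12.1 and~12.2]{BM14b} for $v=(1,0,1-n)$, $v^2/2=n-1$, with $S^{[n]}$ the moduli space in the Gieseker chamber. The sign bookkeeping you single out as the delicate step is in fact immaterial: the isometries $(r,c,s)\mapsto(-r,-c,-s)$, $(r,c,s)\mapsto(r,-c,s)$ and $(r,c,s)\mapsto(-r,c,-s)$ of the algebraic Mukai lattice send $v$ to $\pm v$, preserve $w^2$ and $|(v,w)|$, and realize every sign change of $\theta_v$ on the summands $\NS(S)$ and $\ZZ(1,0,n-1)$ of $v^\perp$, so neither the set of walls nor the set of classes $\theta_v^\vee(w)$ depends on these conventions.
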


For a primitive class $\rho\in H^2(X,\ZZ)$, its divisibility is the positive generator of the ideal $(\rho,H^2(X,\ZZ))\subset\ZZ$. We record the following facts for Hilbert squares.

\begin{lem}[{\cite[Proposition~2.12]{Mon15}}]\label{lem:hilbert-square-wall-types}
Let $X=S^{[2]}$. Every wall of $\Nef(X)$ is of the form $\rho^\perp$ for a primitive class $\rho\in\NS(X)$ such that either $q(\rho)=-2$, or $q(\rho)=-10$ and $\rho$ has divisibility two in $H^2(X,\ZZ)$.
\end{lem}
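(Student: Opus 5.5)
The plan is to derive the lemma from the Bayer--Macr\`i description in Theorem~\ref{thm:BM-nef-mori}, specialized to $n=2$. In that case $v=(1,0,-1)$, $v^2=2$, and a wall is $\theta_v(v^\perp\cap w^\perp)$ for some $0\ne w\in H^*_{\mathrm{alg}}(S,\ZZ)$ with $w^2\ge-2$ and $(v,w)\in\{0,1\}$. For each such $w$ I will find a primitive $\rho\in v^\perp$ with $v^\perp\cap w^\perp=\rho^\perp\cap v^\perp$. Then $\theta_v(\rho)\in\NS(X)$ defines the wall, and it remains to compute its square and divisibility. Since $\theta_v$ is an isometry, squares are preserved.

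\emph{Case $(v,w)=0$.} Here $w\in v^\perp$ already, so I take $\rho=w$, up to dividing by its content. A genuine wall must meet the interior of $\Pos(X)$, so $w^\perp\cap v^\perp$ must be a hyperplane of mixed signature. By the signature $(1,\rho(X)-1)$ of $v^\perp$, this forces $w^2<0$. Since $w$ is integral with $w^2\ge-2$ and the lattice is even, $w^2=-2$. A class of square $-2$ is automatically primitive, which gives the first alternative.

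\emph{Case $(v,w)=1$.} I take $\rho:=2w-v$, which lies in $v^\perp$ because $(v,2w-v)=2-2=0$, and clearly $\rho^\perp\cap v^\perp=w^\perp\cap v^\perp$. Its square is
\[
\rho^2=4w^2-4(v,w)+v^2=4w^2-2\ge-10.
\]
As before, meeting $\Pos(X)$ forces $\rho^2<0$. Evenness gives $w^2\in\{-2,0\}$, so $\rho^2\in\{-10,-2\}$.

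Primitivity of $\rho$: if $\rho=k\rho_0$ with $k\ge2$, then $k^2\mid\rho^2$. This is impossible, because $10$ and $2$ are squarefree.

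Divisibility: the isometry $\theta_v$ extends to $v^\perp\subset\widetilde H(S,\ZZ)\xrightarrow{\sim}H^2(X,\ZZ)$ on the full Mukai lattice. This is the standard Mukai/O'Grady isomorphism, and its restriction to algebraic classes is the map of the preliminaries. For any $u\in v^\perp$ in the full lattice,
\[
(\rho,u)=2(w,u)-(v,u)=2(w,u),
\]
which is even. So $\rho$ has divisibility $2$ in $H^2(X,\ZZ)$, giving the second alternative when $\rho^2=-10$. When $\rho^2=-2$ we are simply back in the first alternative.

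The main obstacle I expect is the justification of two points. The first is the identification of divisibility in $H^2(X,\ZZ)$, rather than in $\NS(X)$, with pairing against the full transcendental-plus-algebraic $v^\perp$. This requires invoking the Mukai isometry on the whole lattice, not only the algebraic one. The second is the reduction to $\rho^2<0$: one must argue that hyperplanes orthogonal to classes of nonnegative square do not cut the interior of $\overline{\Pos(X)}$, and hence cannot be walls of $\Nef(X)$.
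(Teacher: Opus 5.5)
Your argument is correct, and it takes a different route from the paper, which gives no proof of this lemma and simply imports it from \cite[Proposition~2.12]{Mon15}. You instead specialize the Bayer--Macr\`i criterion (Theorem~\ref{thm:BM-nef-mori}), which the paper already recalls, to $n=2$. The admissible $w$ have $(v,w)\in\{0,1\}$, and replacing $w$ by $\rho=w$ or $\rho=2w-v$ gives a class in $v^\perp$ cutting out the same hyperplane. Its square is then forced into $\{-2,-10\}$ by the requirement that the hyperplane meet $\Pos(X)$. This is the same mechanism the paper uses in Lemma~\ref{lem:exceptional-rays-minus-two}. Your route therefore makes Section~\ref{sec:Nef_cone} self-contained modulo Bayer--Macr\`i, while the citation is shorter and places the statement in Mongardi's general framework of wall divisors.

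Three small remarks.
\begin{enumerate}
\item You only show $(\rho,H^2(X,\ZZ))\subseteq2\ZZ$. To get divisibility exactly two, note that the divisibility $d$ divides $q(\rho)=-10$. Moreover, $\rho/d$ has order exactly $d$ in the discriminant group of $H^2(X,\ZZ)=H^2(S,\ZZ)\oplus\ZZ e$, and that group is $\ZZ/2\ZZ$. Hence $d=2$.
\item The full Mukai lattice is not actually needed for evenness. The condition $(v,w)=1$ forces $w=(s+1,c,s)$ with $c\in\NS(S)$, so $\theta_v(2w-v)=-2c^{[2]}-(2s+1)e$. Its pairing with every class of $H^2(S,\ZZ)\oplus\ZZ e$ is visibly even.
\item Your sign argument is already complete: a nonzero $\rho$ with $\rho^\perp$ meeting $\Pos(X)$ lies in the orthogonal of a positive vector, which is negative definite. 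The convention that walls meet $\Pos(X)$ is automatic when $\rho(X)\ge3$, since a codimension-one face contained in $\partial\overline{\Pos(X)}$ would span a totally isotropic subspace of dimension at least $2$.
\end{enumerate}
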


\begin{lem}\label{lem:minus-two-wall-exceptional}
Let $X=S^{[2]}$. If a wall of $\Nef(X)$ is orthogonal to a primitive class $\rho\in\NS(X)$ with $q(\rho)=-2$, then, after changing its sign if necessary, $\rho$ spans a prime exceptional ray.
\end{lem}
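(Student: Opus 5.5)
The plan is to use Markman's theory of prime exceptional divisors together with reflections in $(-2)$-classes. For $X$ of K3$^{[n]}$-type, Markman shows that if $\rho\in\NS(X)$ is primitive with $q(\rho)=-2$, then the reflection
\[
R_\rho(x)=x+(x,\rho)\rho
\]
is a monodromy operator. He further shows that either $\rho$ or $-\rho$ is the class of a prime exceptional divisor, up to a positive multiple, and for $q=-2$ the multiple is $1$. With this in hand, I would argue as follows.

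\textbf{Step 1 (sign).} Since the wall $\rho^\perp$ meets the nef cone in a face of codimension one, it meets $\overline{\Pos(X)}$. Hence $\rho$ is not orthogonal to all of $\Pos(X)$, and we may choose the sign of $\rho$ so that $(\rho,H)>0$ for an ample class $H$ lying near the wall on the nef side.

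\textbf{Step 2 (effectivity).} Apply Markman's result (cf.\ \cite{Mar13}, Theorem~1.2 / Proposition~6.2) in its Hilbert-square form. For $X$ of K3$^{[2]}$-type, a primitive class $\rho$ with $q(\rho)=-2$ has divisibility one. Markman's classification of prime exceptional classes then gives that $\rho$ or $-\rho$ is a multiple of a prime exceptional class $E$ with $q(E)=-2$ (the other type, $q=-8$ with divisibility two, appears as $2\rho$ only when the divisibility of $\rho$ is two). Since the numerical characterization forces $E=\pm\rho$, exactly one of $\pm\rho$ is effective. By Step 1, $(\rho,H)>0$. Because $\overline{\Pos(X)}$ pairs positively with $H$ but $q(\rho)<0$, this inequality alone does not decide the sign. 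So use instead that effective classes pair positively with ample classes via the Fujiki relation
\[
E\cdot H^3=c_Xq(H)(E,H)>0 .
\]
Therefore the effective one among $\pm\rho$ is the one with positive pairing against $H$, namely $\rho$.

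\textbf{Alternative via Bayer--Macr\`i.} Alternatively, one could proceed through Theorem~\ref{thm:BM-nef-mori}. The wall $\rho^\perp$ comes from some $w$ with $w^2\ge-2$ and $0\le(v,w)\le 1$. A $(-2)$-class of $\NS(X)$ corresponds under $\theta_v$ to a spherical class $s\in v^\perp$, giving either a divisorial contraction (a Brill--Noether or Hilbert--Chow type wall) or a flop wall. Bayer--Macr\`i's Theorem~5.7 then identifies $\theta_v(s)$ (up to sign) as the class of the exceptional divisor of the contraction, and this exceptional divisor is irreducible. Either route gives the prime exceptional ray.

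\textbf{Main obstacle.} The delicate point is ruling out that a $(-2)$-wall of the nef cone is merely a flopping wall, which would not contract a divisor. A nef wall orthogonal to a $q=-2$ class could a priori come from a $w$ with $q(\theta_v(\text{projection}))=-2$ but of flopping type. I would handle this via Mongardi's analysis: flopping walls on K3$^{[2]}$ correspond exactly to $q=-10$, divisibility $2$ classes (Lemma~\ref{lem:hilbert-square-wall-types}), whereas $q=-2$ walls are boundaries of the movable cone. By Markman's description of the movable cone, those walls are orthogonal to prime exceptional classes, which completes the argument.
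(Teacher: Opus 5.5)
Your argument is correct in substance, but your main route is not the paper's.

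The paper stays in the Bayer--Macr\`i framework. After fixing the sign of $\rho$ with an ample class, it sets $s=\theta_v^{-1}(\rho)$ and notes that $s$ is a spherical class in $v^\perp$, so the wall is divisorial by \cite[Theorem~5.7(a)]{BM14b}. It then reads off from \cite[Theorem~12.4]{BM14b} that the exceptional divisor is a positive multiple of $\theta_v(s)=\rho$. Your ``alternative'' paragraph is essentially this proof, apart from the flop-wall confusion discussed below.

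Your primary argument instead invokes Markman: a primitive $(1,1)$-class of square $-2$ on a manifold of K3$^{[n]}$-type is monodromy-reflective, so some positive multiple of $\rho$ or $-\rho$ is the class of a prime exceptional divisor $E$. The relation $E\cdot H^3=c_Xq(H)(E,H)>0$ then fixes the sign. This route is more general. It applies to any K3$^{[2]}$-type manifold, needs no moduli-space model, and shows that every primitive $(-2)$-class in $\NS(X)$ spans a prime exceptional ray up to sign, so the nef-wall hypothesis is never used. The paper's route keeps the lemma within the Mukai-lattice machinery it also relies on in Lemma~\ref{lem:exceptional-rays-minus-two} and Section~\ref{sec:higher-2}, and it locates the wall in Bridgeland stability.

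Several of your intermediate claims are false, although none affects the statement about rays. A primitive class of square $-2$ on a K3$^{[2]}$-type manifold need not have divisibility one. Indeed $q(e)=-2$ and $(e,H^2(X,\ZZ))=2\ZZ$, and the prime exceptional divisor on the ray of $e$ is $B=2e$, of square $-8$. So ``$E=\pm\rho$'' and ``the multiple is $1$'' fail precisely in the cases that matter in this paper: every class in $\Aut(X)\cdot e$ has divisibility two, with prime exceptional divisor $\varphi^*B=2\varphi^*e$. Likewise, ``exactly one of $\pm\rho$ is effective'' holds only for $\QQ$-classes, since $e$ itself is not the class of an effective divisor. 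You should phrase everything in terms of rays.

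Finally, your ``main obstacle'' is not an obstacle. A spherical $s\in v^\perp$ always puts the wall in the divisorial case (a) of Bayer--Macr\`i's Theorem~5.7, never in the flopping case. On the Markman route no wall analysis is needed at all.
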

\begin{proof}
Let $A$ be an ample divisor on $X$. After replacing $\rho$ by $-\rho$ if necessary, assume $(\rho,A)>0$. Set $s:=\theta_v^{-1}(\rho)\in v^\perp$. Since $\theta_v$ is an isometry, $s^2=-2$, so $s$ is spherical. By Bayer--Macr\`i~\cite[Theorem~5.7(a)]{BM14b}, the corresponding wall induces a divisorial contraction. By~\cite[Theorem~12.4]{BM14b}, the class of the associated exceptional divisor is a positive multiple of $\theta_v(s)=\rho$. Hence $\rho$ spans a prime exceptional ray.
\end{proof}

\begin{lem}\label{lem:exceptional-rays-minus-two}
Let $X=S^{[2]}$. Every prime exceptional ray of $X$ has a primitive generator of BBF square $-2$.
\end{lem}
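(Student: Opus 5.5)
Let $E$ be a prime exceptional divisor on $X=S^{[2]}$ and let $\rho$ be the primitive generator of the ray $\RR_{\ge0}E$. The plan is to invoke Markman's classification of prime exceptional divisors on manifolds of $K3^{[n]}$-type, specialized to $n=2$. The argument has three steps.

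\textbf{Step 1: the two candidate squares.} By~\cite{Mar13}, $\rho$ satisfies exactly one of the following:
\begin{enumerate}
\item $q(\rho)=-2$;
\item $q(\rho)=-2(n-1)\cdot 4=-8$ and $\rho$ has divisibility $2(n-1)=2$.
\end{enumerate}
More generally, Markman shows that $q(\rho)$ is $-2$ or $-2(n-1)$, up to a factor related to divisibility. For $n=2$ we have $-2(n-1)=-2$, and the divisibility condition (divisibility $2n-2=2$ or $n-1=1$) collapses. So I expect every alternative in the classification to give $q(\rho)=-2$, or possibly the second case above with square $-8$.

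\textbf{Step 2: ruling out square $-8$.} Assume $\rho$ has divisibility $2$ and $q(\rho)=-8$. Write $\rho=\lambda+ke$ with $\lambda\in H^2(S,\ZZ)$; we have $(e,e)=-2$ and $H^2(S,\ZZ)$ is unimodular.
\begin{enumerate}
\item Divisibility $2$ forces $\lambda\in 2H^2(S,\ZZ)$.
\item Primitivity of $\rho$ then forces $k$ to be odd.
\item Hence $q(\rho)=4\lambda'^2-2k^2\equiv -2 \pmod 8$, where $\lambda=2\lambda'$.
\end{enumerate}
This is incompatible with $q(\rho)=-8$. Correctly stated, Markman's second case is therefore the exceptional divisor $e$ itself: $q(e)=-2$, divisibility $2$. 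So both cases of the classification give square $-2$ when $n=2$.

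\textbf{Alternative route via Bayer--Macr\`i.} One could instead argue through the contraction. By~\cite[Theorem~5.7, Theorem~12.4]{BM14b}, a prime exceptional divisor is contracted on some birational model. Its class is proportional to $\theta_v(s)$ for a spherical $s\in v^\perp$, or to $\theta_v(w)$ for an isotropic $w$ with $(w,v)\in\{1,2\}$, modulo $v$. I would compute the projection of $w$ to $v^\perp$, namely $w-\tfrac{(w,v)}{v^2}v$ with $v^2=2$.
\begin{enumerate}
\item For $(w,v)=1$, this is $w-\tfrac12 v$, with square $0-1+\tfrac12=-\tfrac12$. Its primitive integral multiple, $2w-v$, has square $-2$.
\item For $(w,v)=2$ (Hilbert--Chow type), the projection is $w-v$, with square $-4+2=-2$.
\end{enumerate}
So in every case the primitive generator has square $-2$.

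The main obstacle is quoting the precise form of Markman's (or Bayer--Macr\`i's) classification, and checking the primitivity and divisibility bookkeeping in the case $(w,v)=1$. I would follow the Bayer--Macr\`i route, since the paper already uses $\theta_v$ and the computation above settles it.
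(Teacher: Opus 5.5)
Your Bayer--Macr\`i route is correct and is essentially the paper's proof: the paper sets $u=(2w-kv)/k$ with $k=(v,w)\in\{1,2\}$ (your $2w-v$ and $w-v$), checks $u^2=-2$ (so $u$ is primitive), and notes that the exceptional class is a positive multiple of $\theta_v(2w-kv)=k\,\theta_v(u)$; the spherical case is immediate. Two side remarks, neither of which affects the argument you chose: Markman's classification has no square $-8$ case (the primitive class has $q(\rho)=-2$ or $q(\rho)=2-2n$, which coincide when $n=2$, so your Step~2 is unnecessary), and the Hilbert--Chow contraction is the case $(w,v)=1$ (e.g.\ $w=(0,0,-1)$ gives $\theta_v(2w-v)=e$), while $(w,v)=2$ is of Li--Gieseker--Uhlenbeck type.
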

\begin{proof}
The Mukai vector is $v=(1,0,-1)$, so $v^2=2$. By Bayer--Macr\`i~\cite[Theorem~12.4]{BM14b}, a prime exceptional ray comes either from a spherical class $s\in v^\perp$ with $s^2=-2$, or from an isotropic class $w$ with $k:=(v,w)\in\{1,2\}$. The first case gives a primitive generator of square $-2$. In the second case, set $u:=(2w-kv)/k$. Since $k\in\{1,2\}$, this is an integral class, and $(u,v)=0$. Moreover,
\[
u^2=\frac{4w^2-4k(v,w)+k^2v^2}{k^2}=-2.
\]
Thus $u$ is primitive. The exceptional divisor class in~\cite[Theorem~12.4]{BM14b} is a positive multiple of
\[
\theta_v(2w-kv)=k\theta_v(u),
\]
so $\theta_v(u)$ is the primitive generator of the corresponding exceptional ray.
\end{proof}

\subsection{Generalized Fibonacci sequences}\label{subsec:general-Fibonacci}
Fix $a\ge1$ and define the generalized Fibonacci sequence $(a_m)_{m\in\ZZ}$ by $a_0=0$, $a_1=1$, and $a_{m+2}=a\,a_{m+1}+a_m$.
For $a=1$, this is the classical Fibonacci sequence.

Let $\alpha$ and $\beta$ denote the roots of the equation $r^2-ar-1=0$:
\[
\alpha=\frac{a-\sqrt{a^2+4}}{2},\qquad \beta=\frac{a+\sqrt{a^2+4}}{2}.
\]
They satisfy $\alpha+\beta=a$ and $\alpha\beta=-1$.

\begin{prop}\label{prop:general-Fibonacci-identities}
The sequence $(a_m)$ satisfies the following properties.

\begin{enumerate}
\item[(1)] (Negative indices) For $m\ge1$, $a_{-m}=(-1)^{m+1}a_m$.

\item[(2)] (Addition and subtraction formulas) For all $m,n\in\ZZ$,
\[
a_{m+n}=a_m a_{n+1}+a_{m-1}a_n,\qquad
a_{m-n}=(-1)^n\bigl(a_m a_{n-1}-a_{m-1}a_n\bigr).
\]

\item[(3)] (d'Ocagne identity) For all $m,n\in\ZZ$, $a_m a_{n+1}-a_{m+1}a_n=(-1)^n a_{m-n}$.

\item[(4)] (Binet formula) For every $m\in\ZZ$,
\[
a_m=\frac{\beta^m-\alpha^m}{\beta-\alpha}.
\]
\end{enumerate}
\end{prop}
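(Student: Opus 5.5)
The plan is to prove the Binet formula (4) first, since it makes the other identities routine algebra. I will then derive (1), (2) and (3) from it, using only $\alpha+\beta=a$ and $\alpha\beta=-1$.

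\emph{Binet formula.} Set $b_m:=(\beta^m-\alpha^m)/(\beta-\alpha)$ for all $m\in\ZZ$. This is well defined because $\beta-\alpha=\sqrt{a^2+4}\neq0$, and $\alpha,\beta\ne0$ since $\alpha\beta=-1$. Both $\alpha$ and $\beta$ satisfy $r^{m+2}=a\,r^{m+1}+r^m$ for every $m\in\ZZ$, so $(b_m)$ satisfies the same recurrence as $(a_m)$. Moreover $b_0=0$ and $b_1=1$. The recurrence $a_{m+2}=a\,a_{m+1}+a_m$ can be solved forwards, and also backwards as $a_m=a_{m+2}-a\,a_{m+1}$. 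Hence a two-sided sequence satisfying it is determined by its values at $0$ and $1$, and induction in both directions gives $a_m=b_m$ for all $m\in\ZZ$.

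\emph{Negative indices.} From $\alpha=-\beta^{-1}$ we get
\[
\beta^{-m}-\alpha^{-m}=(-\alpha)^m-(-\beta)^m=(-1)^{m+1}(\beta^m-\alpha^m).
\]
Dividing by $\beta-\alpha$ gives $a_{-m}=(-1)^{m+1}a_m$.

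\emph{Addition formula.} Expand $(\beta-\alpha)^2\bigl(a_ma_{n+1}+a_{m-1}a_n\bigr)$ using Binet. The pure terms sum to
\[
\beta^{m+n}(\beta+\beta^{-1})+\alpha^{m+n}(\alpha+\alpha^{-1}).
\]
Since $\beta^{-1}=-\alpha$, we have $\beta+\beta^{-1}=\beta-\alpha$ and $\alpha+\alpha^{-1}=\alpha-\beta$, so these contribute $(\beta-\alpha)(\beta^{m+n}-\alpha^{m+n})$. The cross terms are
\[
-\beta^m\alpha^{n+1}-\alpha^m\beta^{n+1}-\beta^{m-1}\alpha^n-\alpha^{m-1}\beta^n.
\]
They cancel in pairs via $\beta^{-1}=-\alpha$: for example $\beta^{m-1}\alpha^n=-\beta^m\alpha^{n+1}$. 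This yields the addition formula.

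\emph{Subtraction formula.} Apply the addition formula with $n$ replaced by $-n$, and use (1) to rewrite $a_{-n+1}$ and $a_{-n}$. Since $-n+1=-(n-1)$, this gives $a_{-n+1}=(-1)^n a_{n-1}$ and $a_{-n}=(-1)^{n+1}a_n$. Substituting into $a_{m-n}=a_ma_{-n+1}+a_{m-1}a_{-n}$ gives exactly the stated expression.

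\emph{d'Ocagne identity.} Apply the subtraction formula with $m$ replaced by $m+1$ and $n$ replaced by $n+1$:
\[
a_{m-n}=(-1)^{n+1}\bigl(a_{m+1}a_n-a_ma_{n+1}\bigr)=(-1)^n\bigl(a_ma_{n+1}-a_{m+1}a_n\bigr).
\]
Multiplying both sides by $(-1)^n$ gives (3).

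The only step needing any care is the cross-term cancellation in the addition formula, together with the sign bookkeeping when applying (1) at negative indices. All of this is mechanical; I expect no genuine obstacle.
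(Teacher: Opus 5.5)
Your proof is correct, but it is organized differently from the paper's. The paper works entirely with the integer recurrence. It obtains (1) by running $a_m=a_{m+2}-a\,a_{m+1}$ backwards. It proves the addition formula by noting that, for fixed $m$, both sides are solutions in $n$ of the same recurrence that agree at $n=0,1$. The Binet formula comes last, by the same uniqueness principle, so $\alpha,\beta$ never enter the derivation of (1)--(3).

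You instead prove Binet first and derive (1) and the addition formula as algebraic identities in $\alpha,\beta$, using only $\alpha\beta=-1$. Your cross-term cancellation and sign bookkeeping check out. Your subtraction step is the same as the paper's. Your d'Ocagne step, reindexing the subtraction formula by $m\mapsto m+1$, $n\mapsto n+1$, is a slightly more direct version of the paper's, which first uses the recurrence to rewrite $a_ma_{n+1}-a_{m+1}a_n$ as $a_ma_{n-1}-a_{m-1}a_n$.

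The paper's route is shorter and stays in $\ZZ$, with one uniform argument doing all the work. Yours replaces the inductive reasoning with a closed-form computation. It also gives (1) for every $m\in\ZZ$, which is what you need when applying (1) to $a_{-n+1}$ and $a_{-n}$ for arbitrary $n$ in the subtraction formula. The paper states (1) only for $m\ge1$ and leaves that trivial extension implicit.
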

\begin{proof}
The first identity follows by applying the recurrence backwards. For fixed $m$, both sides of the addition formula in (2) satisfy the same recurrence in $n$ and agree for $n=0,1$, so the formula holds for all $m,n\in\ZZ$. Replacing $n$ by $-n$ and using the first identity gives the subtraction formula. The d'Ocagne identity follows from
\[
a_m a_{n+1}-a_{m+1}a_n =a_m a_{n-1}-a_{m-1}a_n =(-1)^n a_{m-n},
\]
where the last equality is the subtraction formula. Finally, since $\alpha$ and $\beta$ are the roots of $r^2-ar-1=0$, the sequence $(\beta^m-\alpha^m)/(\beta-\alpha)$ satisfies the same recurrence and initial conditions as $a_m$, proving (4).
\end{proof}

\section{Nef and effective cones of Hilbert squares}\label{sec:Nef_cone}
Let $a\ge1$, set $S=S_a$ and $X=S^{[2]}$, and write classes in the basis $(h_1,e,h_2)$ of $\NS(X)$. Then $q(x,y,z)=4x^2+4axz-4z^2-2y^2$. For $k\in\ZZ$, set $F_k:=a_{2k-1}h_1+a_{2k}h_2$. The identities in Proposition~\ref{prop:general-Fibonacci-identities} give
\[
F_k^2=4,\qquad F_k\cdot F_{k+1}=2(a^2+2).
\]

The recurrence together with the negative-index formula gives $F_k\cdot h_1>0$, so $F_k$ lies in the component of the positive cone containing $h_1$. Since $\gcd(a_{m+1},a_m)=\gcd(a_m,a_{m-1})$ and $\gcd(a_1,a_0)=1$, consecutive terms of the sequence $(a_m)$ are coprime. Hence $F_k$ is primitive. Together with the absence of nonzero isotropic and $(-2)$-classes in $\NS(S_a)$, this excludes the exceptional cases in Saint-Donat's criterion~\cite{SD74}, so $F_k$ is very ample. The quartic model defined by $F_k$ contains no lines, since a line would be a rational curve and hence have square $-2$ by adjunction. Thus the associated Beauville involution $\iota_k$ is regular.

In the basis $(h_1,e,h_2)$,
\[
\iota_0^* =
\begin{pmatrix}
3 & 2 & 2a \\
-4 & -3 & -2a \\
0 & 0 & -1
\end{pmatrix},
\qquad
P:=
\begin{pmatrix}
1 & 0 & a \\
0 & 1 & 0 \\
a & 0 & a^2+1
\end{pmatrix}.
\]
The matrix $P$ fixes $e$ and preserves the BBF form. Since $F_k=P^kF_0$, the reflection formula gives
\[
\iota_k^*=P^k\iota_0^*P^{-k}.
\]

Recall that $\alpha=(a-\sqrt{a^2+4})/2$ and $\beta=(a+\sqrt{a^2+4})/2$.

The following numerical lemma provides the exclusion needed to classify the prime exceptional rays.

\begin{lem}\label{lem:interval-covering}
For $a=1,2$, let $\xi=L+ye\in\NS(X)$, where $L=xh_1+zh_2$. Suppose that $q(\xi)=-2$, $x>0$, $y<0$, and $F_k\cdot L\ge-4y$ for every $k\in\ZZ$. Then no such $\xi$ exists.
\end{lem}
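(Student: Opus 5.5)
The plan is to reduce the condition to a statement about the rank-two lattice $\NS(S_a)$ and then contradict it with a hyperbolic Cauchy--Schwarz type estimate. Set $m:=-y\ge1$. From $q(\xi)=L^2-2y^2=-2$ we get $L^2=2m^2-2\ge0$. If $m=1$, then $L^2=0$. Since $\NS(S_a)$ has no nonzero isotropic classes, this forces $L=0$, contradicting $x>0$. Hence $m\ge2$ (in fact $m$ is odd, because $L^2\equiv0\bmod4$), so $L^2>0$. Since $F_k\cdot L\ge4m>0$ and $F_k$ lies in the positive component, $L$ lies in the positive cone component containing $h_1$. The hypothesis then says
\[
\min_k F_k\cdot L\ \ge\ 4m \ >\ 2\sqrt2\,\sqrt{2m^2-2}=2\sqrt2\sqrt{L^2},
\]
and the goal is to show that $\min_k F_k\cdot L\le\sqrt{(4+c)/2}\,\sqrt{L^2}$, where $c:=F_k\cdot F_{k+1}=2(a^2+2)$. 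For $a=1$ and $a=2$ this constant equals $\sqrt5$ and $\sqrt8=2\sqrt2$ respectively. Both are $\le2\sqrt2$, which gives the contradiction; the inequality above is strict because $m^2>m^2-1$.

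\textbf{Step 1: reduce to the cone spanned by $F_0$ and $F_1$.} Using the Binet formula, $F_k\cdot L$ grows exponentially as $k\to\pm\infty$. The point is that $F_k/\beta^{2k}$ and $F_{-k}/\beta^{2k}$ tend to the two (irrational) isotropic directions, and these pair positively with $L$ because $L$ lies in the open positive cone. So some $k_0$ minimizes $F_k\cdot L$. Now $P$ is an isometry of $\NS(S_a)$ with $PF_k=F_{k+1}$, and the hypothesis is invariant under replacing $L$ by $P^{-k_0}L$. So we may assume $F_0\cdot L\le F_{\pm1}\cdot L$. In the hyperbolic plane, the geodesic through the points $F_k$ is partitioned into the segments $[F_k,F_{k+1}]$. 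Since $L$ is minimally "far" from $F_0$, it lies in $\operatorname{Cone}(F_{-1},F_0)$ or $\operatorname{Cone}(F_0,F_1)$. Applying $P$ once more if needed, we may assume $L=sF_0+tF_1$ with $s,t\ge0$.

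\textbf{Step 2: the estimate on $\operatorname{Cone}(F_0,F_1)$.} Set $\mu:=\min(F_0\cdot L,F_1\cdot L)$. Then
\[
2\mu\le (F_0+F_1)\cdot L=(4+c)(s+t),
\qquad
L^2=4s^2+4t^2+2cst.
\]
Since $c\ge4$, we have $L^2-\tfrac{8+2c}{4}(s+t)^2=\tfrac{c-4}{2}\cdot 2st\ge0$, so $L^2\ge\tfrac{4+c}{2}(s+t)^2$. Combining the two displays gives
\[
\mu\le\tfrac{4+c}{2}(s+t)\le\sqrt{\tfrac{4+c}{2}}\sqrt{L^2}.
\]
This is the required bound. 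Together with $\mu\ge4m>2\sqrt2\sqrt{L^2}$ and $\tfrac{4+c}{2}\in\{5,8\}$, it yields the contradiction.

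I expect the main obstacle to be making Step 1 rigorous. Specifically, one must check that the cones $\operatorname{Cone}(F_k,F_{k+1})$ exhaust the positive cone, equivalently that the limits of $F_k$ as $k\to\pm\infty$ are the two isotropic boundary rays. I would prove this directly from Binet: $\beta^{-2k}F_k\to(\beta-\alpha)^{-1}(\beta^{-1}h_1+h_2)$ as $k\to+\infty$, which is an isotropic vector, and similarly for $k\to-\infty$ with $\alpha$. Then a continuity/monotonicity argument on the ratio of coordinates of $F_k$ places any positive $L$ between two consecutive $F_k$. This avoids needing the minimizer argument at all.
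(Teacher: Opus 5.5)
There is a genuine error in Step 2, and Step 2 is where the whole argument lives. The identity you state is wrong. Expanding $L^2=4s^2+4t^2+2cst$ gives
\[
L^2-\tfrac{4+c}{2}(s+t)^2=-\tfrac{c-4}{2}(s-t)^2\le0,
\]
so in fact $L^2\le\frac{4+c}{2}(s+t)^2$, with equality only when $s=t$. Consequently the second inequality in your chain $\mu\le\frac{4+c}{2}(s+t)\le\sqrt{\frac{4+c}{2}}\sqrt{L^2}$ is false. Already at $L=F_0$ ($s=1$, $t=0$) it would say $\frac{4+c}{2}\le2\sqrt{\frac{4+c}{2}}$, i.e.\ $c\le4$, which fails for every $a\ge1$. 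This is not only a sign slip: bounding $\mu$ by the average $\frac12(F_0+F_1)\cdot L$ is too lossy. With the correct inequality $L^2\ge4(s+t)^2$, the averaging route gives only $\mu^2\le\frac{(4+c)^2}{16}L^2$. That beats $8L^2$ for $a=1$ ($c=6$) but not for $a=2$ ($c=12$).

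The estimate you need, $\mu^2\le\frac{4+c}{2}L^2$, is nevertheless true on $\operatorname{Cone}(F_0,F_1)$ and is sharp at $s=t$, but you must bound the minimum itself. By the symmetry $F_0\leftrightarrow F_1$, assume $s\ge t\ge0$. Then $\mu\le F_0\cdot L=4s+ct$ and
\[
\tfrac{4+c}{2}L^2-(4s+ct)^2=(c-4)(s-t)\bigl(2s+(c+2)t\bigr)\ge0.
\]
Hence $\mu^2\le\frac{4+c}{2}L^2\le8L^2=16m^2-16<16m^2\le\mu^2$, a contradiction. The condition $\frac{4+c}{2}\le8$ is exactly $a\le2$.

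Step 1 is fine in its slope form, so you can drop the minimizer heuristic. The slopes of the $F_k$ increase strictly because $\det(F_k,F_{k+1})=a>0$ and all $h_1$-coefficients are positive, and they tend to $\alpha$ and $\beta$.

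With this repair you have a valid proof that differs from the paper's. The paper passes to the slope $r=z/x$ and shows that the $k$-th hypothesis excludes the interval $I_k=f^k(I_0)$. It then uses $\psi(r)=(r-\alpha)/(\beta-r)$, which conjugates $f$ to multiplication by $\beta^4$, together with $B/A=17+12\sqrt2\ge\beta^4$, to show the $I_k$ cover $(\alpha,\beta)$. Both arguments encode the same hyperbolic fact: each hypothesis keeps $L$ out of a fixed-size neighbourhood of $F_k$, and the $P$-translates of that neighbourhood cover the positive cone precisely when $a\le2$. Your version reduces this to one polynomial inequality on a single fundamental segment of $\langle P\rangle$ and avoids computing $\psi(I_0)$.
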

\begin{proof}
Set $r:=z/x$. From $q(\xi)=-2$, we have
\[
1+ar-r^2=\frac{y^2-1}{2x^2}\ge0.
\]
Hence $r\in[\alpha,\beta]$. Since $r\in\QQ$ while $\alpha$ and $\beta$ are irrational, in fact $r\in(\alpha,\beta)$. Since $F_0=h_1$, the case $k=0$ gives $2x+az\ge-2y>0$, which, together with $q(\xi)=-2$, yields $(a^2+8)r^2-4ar-4>0$. Hence
\[
r\notin I_0:= \Biggl[ \frac{2a-2\sqrt{2a^2+8}}{a^2+8}, \frac{2a+2\sqrt{2a^2+8}}{a^2+8} \Biggr].
\]
Since $F_k=P^kF_0$ and $P$ preserves the BBF form and fixes $e$, we have $F_k\cdot L=F_0\cdot P^{-k}L$. The slope of $P^{-k}L$ is well-defined for every $k$, since otherwise $q(P^{-k}\xi)=-2$ and $F_0\cdot P^{-k}L\ge-4y>0$ give a contradiction. Thus the $k$-th inequality is the $k=0$ inequality applied to $P^{-k}\xi$. On slopes, $P$ acts by $f(r)=(a+(a^2+1)r)/(1+ar)$, so $f^{-k}(r)\notin I_0$, or equivalently $r\notin I_k:=f^k(I_0)$, for every $k$.

Set $g(t):=(a^2+8)t^2-4at-4$. Then $g(0)=-4$, while $g(\alpha)=(a^2+4)\alpha^2>0$ and $g(\beta)=(a^2+4)\beta^2>0$. Hence the two roots of $g$ lie in $(\alpha,0)$ and $(0,\beta)$, respectively, so $I_0\subset(\alpha,\beta)$. The map $\psi(r):=(r-\alpha)/(\beta-r)$ identifies $(\alpha,\beta)$ with $(0,\infty)$ and satisfies $\psi(f(r))=\beta^4\psi(r)$. Write $\psi(I_0)=[A,B]$. A direct calculation gives
\[
\frac{B}{A}=17+12\sqrt2.
\]
Hence $\psi(I_k)=\beta^{4k}[A,B]$. For $a=1,2$, we have $\beta^4\le17+12\sqrt2=B/A$. Thus consecutive intervals overlap, while their endpoints tend to $0$ and $\infty$ as $k\to-\infty$ and $k\to\infty$, respectively. Hence they cover $(0,\infty)$, contradicting $r\notin I_k$ for every $k$.
\end{proof}

\begin{lem}\label{lem:prime-exceptional-orbits}
For $a=1,2$, let $X=S_a^{[2]}$. Then the prime exceptional rays of $X$ are precisely the rays generated by the classes in $\Aut(X)\cdot e$.
\end{lem}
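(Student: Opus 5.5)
The proof has two inclusions. For the easy direction, note that $e$ spans a prime exceptional ray: $B=2e$ is the exceptional divisor of the Hilbert--Chow morphism. It is reduced and irreducible with $q(e)=-2$, so $e$ is a primitive generator of a prime exceptional ray. Automorphisms of $X$ carry reduced irreducible divisors to reduced irreducible divisors and preserve the BBF form. Hence every class in $\Aut(X)\cdot e$ spans a prime exceptional ray.

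For the converse, let $\xi$ be a primitive generator of a prime exceptional ray. By Lemma~\ref{lem:exceptional-rays-minus-two}, $q(\xi)=-2$. Write $\xi=L+ye$ with $L=xh_1+zh_2$. The plan is a descent argument using the Beauville involutions $\iota_k$, which lie in $\Aut(X)$.

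First, compute from the reflection formula
\[
\iota_k^*(\xi)=-\xi+2(\xi,F_k-e)(F_k-e),\qquad (\xi,F_k-e)=F_k\cdot L+2y.
\]
The $e$-coefficient of $\iota_k^*\xi$ is therefore
\[
-y-2(F_k\cdot L+2y)=-2F_k\cdot L-5y.
\]
Next, record two sign facts.
\begin{itemize}
\item Since $\xi$ is effective and $A=F_0-e$ is ample, $(\xi,A)>0$, which gives $F_0\cdot L>-2y$.
\item The $e$-coefficient of an effective prime divisor other than $B$ is negative: its restriction to a general Hilbert--Chow fiber is nonnegative, and $e\cdot C_0=-1$ together with $L\cdot C_0=0$ give $-y\ge0$. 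If $y=0$, then $q(L)=-2$ in $\NS(S)$, which is impossible. So either $\xi=e$ or $y<0$.
\end{itemize}

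Assume $\xi\neq e$, so $y<0$. We need $x>0$ in order to apply Lemma~\ref{lem:interval-covering}. From $q(\xi)=-2$ we get $q(L)=2y^2-2\ge0$, with equality only if $y=\pm1$ and $L=0$, and then $\xi=-e$, which is not effective. So $L$ lies in the closed positive cone of $S$, and $L\neq0$. Because $F_0\cdot L>-2y>0$, $L$ lies in the component of $h_1$. Then $x>0$ follows: $h_2\cdot L$ could be of either sign, but a class $L=xh_1+zh_2$ with $x\le0$ and $L^2\ge0$ in the component of $h_1$ can be excluded using $h_1\cdot L>0$. If this step becomes awkward, I would instead apply $P$-conjugation and the slope argument of the lemma directly; that lemma only used $x>0$ to define the slope $r$.

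Now apply Lemma~\ref{lem:interval-covering}. It shows that some $k$ satisfies $F_k\cdot L<-4y$. For that $k$, the new $e$-coefficient satisfies
\[
-2F_k\cdot L-5y>8y-5y=3y=-3|y|\cdot(-1),
\]
and more usefully, $|\!-\!2F_k\cdot L-5y|<|y|$. Indeed, $F_k\cdot L>-2y$ for all $k$: each $F_k-e$ is ample (in the range where $\iota_k$ is regular, $F_k-e$ is the invariant ample class) and $\xi$ is effective. Hence $-2F_k\cdot L-5y\in(-y,\,3y)$ in signed terms, and for $y<0$ its absolute value is strictly less than $|y|$. Since $\iota_k^*\xi$ is again a prime exceptional class, its $e$-coefficient is $\le0$ by the sign fact above. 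So $|y|$ strictly decreases, and induction on $|y|$ ends at $y=0$ or at $\xi=e$. The case $y=0$ is impossible, so $\xi\in\Aut(X)\cdot e$.

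The main obstacle is the sign and positivity bookkeeping. We must confirm that each $F_k-e$ is ample, that $x>0$, and that the strict decrease of $|y|$ really holds. Among these, I expect the ampleness of $F_k-e$, equivalently the positivity $(\xi,F_k-e)>0$ for all $k$, to need the most care. It should follow from regularity of $\iota_k$: the invariant class of a regular Beauville involution is ample by definition.
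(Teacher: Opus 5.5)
Your overall strategy is the paper's: descending on $|y|$ with the Beauville involutions is the forward version of the paper's minimal-counterexample argument, which minimizes $(e,\xi)=-2y$. But the decisive inequality in your descent step is wrong as written. Since $q(F_k-e)=4-2=2$, the reflection formula gives
\[
\iota_k^*\xi=-\xi+(\xi,F_k-e)(F_k-e),
\]
not $-\xi+2(\xi,F_k-e)(F_k-e)$. Your map is not even an isometry: it sends $e$ to $4h_1-5e$, which has square $14$, whereas in fact $\iota_0^*e=2h_1-3e$.

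Take your coefficient $-2F_k\cdot L-5y$. The bounds $F_k\cdot L<-4y$ and $F_k\cdot L>-2y$ only place it in $(3y,-y)$. For $y<0$ this allows absolute value up to $3|y|$, so the claimed strict decrease of $|y|$ does not follow. (The interval $(-y,3y)$ you wrote is empty for $y<0$, and $3y=-3|y|\cdot(-1)$ is false.) So the induction, as you wrote it, has no valid decreasing step.

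With the correct formula, the new $e$-coefficient is $-F_k\cdot L-3y$. This is exactly the paper's identity $(e,\iota_k^*\xi)-(e,\xi)=2(F_k\cdot L+4y)$, and the threshold $-4y$ in Lemma~\ref{lem:interval-covering} is calibrated to it. Now $F_k\cdot L<-4y$ gives $-F_k\cdot L-3y>y$.

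Your sign fact, applied to the prime divisor $\iota_k^*V$, needs a small correction: it shows that either $\iota_k^*\xi=e$ or its $e$-coefficient is negative, not merely $\le0$. Hence either the descent stops at $e$, or $|y|$ strictly drops. The induction then goes through, giving $\xi\in G\cdot e\subseteq\Aut(X)\cdot e$.

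After this fix you never need $(\xi,F_k-e)>0$ for $k\neq0$, so the ampleness issue you flagged as the main obstacle disappears. You only need $h_1-e$ ample, which holds because the quartic model of $h_1$ contains no lines.

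Your other ingredients are valid substitutes for the paper's. Arguing that $V\cdot C_0\ge0$ for a general Hilbert--Chow fibre when $V\neq B$ replaces the citation of Boucksom. Arguing via $(\xi,h_1-e)>0$, plus the fact that $x$ has constant sign on each component of the positive cone, replaces the projection-formula and Fujiki-relation argument with $h_1^{[2]}$.
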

\begin{proof}
Let $G\subset\Aut(X)$ be the subgroup generated by the Beauville involutions $\iota_k$. Since $B=2e$ is prime exceptional and automorphisms preserve prime exceptional divisors, every ray generated by a class in $G\cdot e$ is prime exceptional. Suppose that there is a prime exceptional ray not generated by a class in $G\cdot e$. Let $\xi=(x,y,z)$ be its primitive generator and let $V$ be a prime divisor spanning the ray. By Lemma~\ref{lem:exceptional-rays-minus-two}, $\xi^2=-2$. Since $V\ne B$, \cite[Proposition~4.2(ii)]{boucksom2004divisorial} gives $(2e,[V])\ge0$. As $[V]$ is a positive multiple of $\xi$, we obtain $(e,\xi)\ge0$. If equality held, then $y=0$, and $\xi^2=-2$ would give $-2=4(x^2+axz-z^2)$, a contradiction. Hence $(e,\xi)>0$, so $y<0$.

Set $L:=xh_1+zh_2$. Since $B$ is the only divisor contracted by the Hilbert--Chow morphism and $V\ne B$, the image of $V$ is a divisor on $S_a^{(2)}$. Since $h_1^{[2]}$ is the pullback of an ample class on $S_a^{(2)}$, the projection formula gives $V\cdot(h_1^{[2]})^3>0$. The Fujiki relation then gives $([V],h_1)>0$, hence $(\xi,h_1)=(L,h_1)>0$. From $\xi^2=-2$, we have $L^2=2(y^2-1)\ge0$. As $S_a$ has no nonzero isotropic classes, $L^2>0$. Thus $L$ lies in the positive component containing $h_1$. Since $q(L)>0$ forces $x\ne0$, the two components are distinguished by the sign of $x$, so $x>0$.

Among the prime exceptional rays not generated by classes in $G\cdot e$, choose one minimizing the positive integer $(e,\xi)=-2y$. For every $k$, the divisor $\iota_k^*V$ is prime exceptional, and its ray is again not generated by a class in $G\cdot e$, since $G\cdot e$ is $G$-invariant. Minimality therefore gives $(e,\iota_k^*\xi)\ge(e,\xi)$. Since $\iota_k^*$ is the reflection in $F_k-e$, we have $(e,\iota_k^*\xi)-(e,\xi)=2(F_k\cdot L+4y)$, so $F_k\cdot L\ge-4y$ for every $k$. Lemma~\ref{lem:interval-covering} gives a contradiction. Hence every prime exceptional ray is generated by a class in $G\cdot e$.

Conversely, if $\varphi\in\Aut(X)$, then $\varphi^*B$ is prime exceptional, so $\varphi^*e$ spans a prime exceptional ray. Hence $\RR_{\ge0}\varphi^*e=\RR_{\ge0}g^*e$ for some $g\in G$. Thus the rays generated by $\Aut(X)\cdot e$ are precisely the prime exceptional rays.
\end{proof}

\begin{lem}\label{lem:no-rational-isotropic-cayley}
Let $X=S_1^{[2]}$. Then $N^1(X)_\QQ$ contains no nonzero isotropic class.
\end{lem}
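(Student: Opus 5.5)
We need to show that $q(x,y,z)=4x^2+4xz-4z^2-2y^2$ has no nontrivial rational zero. We will clear denominators and reduce to a statement about integers.

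Suppose $(x,y,z)\in\QQ^3$ is nonzero with $q=0$. Scaling, we may assume $x,y,z\in\ZZ$ with $\gcd(x,y,z)=1$. The equation is $2(x^2+xz-z^2)=y^2$.

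The key tool is the identity
\[
4(x^2+xz-z^2)=(2x+z)^2-5z^2 .
\]
Set $u=2x+z$. Then $2(u^2-5z^2)=4y^2$, which gives
\[
u^2-5z^2=2y^2 .
\]
Here $\gcd(u,z,y)$ divides $2\gcd(x,y,z)$, so it is at most $2$. We will work modulo $5$ and use a descent argument to show that no primitive solution exists.

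First we reduce modulo $5$. The equation becomes $u^2\equiv 2y^2 \pmod 5$. Since $2$ is not a square mod $5$, this forces $u\equiv y\equiv0\pmod 5$. Then $25\mid u^2-2y^2=5z^2$, so $5\mid z$. Hence $5$ divides $u$, $y$ and $z$.

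Next we pass this back to $x$. From $5\mid u=2x+z$ and $5\mid z$ we get $5\mid 2x$, hence $5\mid x$. So $5$ divides $\gcd(x,y,z)=1$, a contradiction.

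There is also a shortcut: for $y\neq 0$, the surface argument is unnecessary. The form $x^2+xz-z^2$ has discriminant $5$, and $2$ is inert in $\QQ(\sqrt5)$. So $x^2+xz-z^2=y^2/2$ would have to be a norm with odd $2$-adic valuation, which is impossible. The mod-$5$ argument above avoids this algebraic number theory and is the one I would write.

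It remains to treat $y=0$. Then $x^2+xz-z^2=0$, which is excluded because the discriminant $5$ is not a square, as already noted in Section~\ref{subsec:K3-Hilbert}. Note that the mod-$5$ argument covers this case as well.

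I expect no real obstacle. The only care needed is the gcd bookkeeping when passing from $(x,y,z)$ to $(u,y,z)$, and this is handled by deducing $5\mid x$ directly, as above.
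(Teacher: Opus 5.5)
Your proposal is correct and follows the paper's proof essentially verbatim: both rewrite $q=0$ as $(2x+z)^2-2y^2=5z^2$, use that $2$ is a nonsquare modulo $5$ to get $5\mid y$ and $5\mid 2x+z$, and then deduce $5\mid z$ and $5\mid x$, contradicting primitivity. Your side remarks (the $2$-adic norm argument in $\QQ(\sqrt5)$ and the separate treatment of $y=0$) are also correct, but the main argument does not need them.
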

\begin{proof}
After clearing denominators, a nonzero rational solution of $q(x,y,z)=0$ would give a primitive integral one. The equation $q(x,y,z)=0$ is equivalent to
\[
(2x+z)^2-2y^2=5z^2.
\]
Reducing modulo $5$, we obtain
\[
(2x+z)^2\equiv 2y^2\pmod 5.
\]
Since $2$ is a nonsquare modulo $5$, this congruence forces both sides to vanish modulo $5$. Hence $5\mid y$ and $5\mid(2x+z)$. It follows that both terms on the left-hand side of $(2x+z)^2-2y^2=5z^2$ are divisible by $25$, so $5\mid z$. Since $5\mid(2x+z)$ as well, we then obtain $5\mid x$, contradicting primitivity.
\end{proof}

\begin{thm}\label{thm:nef_cone_of_S^[2]}
Let $X=S_1^{[2]}$. The walls of the nef cone of $X$ are precisely the hyperplanes orthogonal to the classes in the orbit $\Aut(X)\cdot e$. Moreover,
\[
\Eff(X)=\operatorname{Cone}\bigl(\Aut(X)\cdot e\bigr).
\]
\end{thm}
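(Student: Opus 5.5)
The plan is to combine the wall classification for Hilbert squares with the description of prime exceptional rays. First I will show that no $(-10)$-walls occur for $X=S_1^{[2]}$. By Lemma~\ref{lem:hilbert-square-wall-types}, every wall of $\Nef(X)$ is $\rho^\perp$ for a primitive $\rho\in\NS(X)$ with either $q(\rho)=-2$, or $q(\rho)=-10$ and divisibility two. In the second case, write $\rho=xh_1+ye+zh_2$ in the basis $(h_1,e,h_2)$. Divisibility two in $H^2(X,\ZZ)=H^2(S,\ZZ)\oplus\ZZ e$ means $(\rho,\lambda)$ is even for all $\lambda\in H^2(S,\ZZ)$. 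By unimodularity of $H^2(S,\ZZ)$, this forces the $H^2(S)$-component $xh_1+zh_2$ to be divisible by $2$, so $x,z$ are even and $y$ is odd (primitivity). Then $q(\rho)=4x^2+4xz-4z^2-2y^2\equiv -2\pmod{16}$. Since $-10\not\equiv-2\pmod{16}$, no such $\rho$ exists.

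Hence every wall is orthogonal to a primitive class of square $-2$. By Lemma~\ref{lem:minus-two-wall-exceptional}, after a sign change such a class spans a prime exceptional ray, and by Lemma~\ref{lem:prime-exceptional-orbits} that ray is spanned by some $\varphi^*e$. This shows every wall lies in $\{(\varphi^*e)^\perp\}$.

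For the converse I must show each hyperplane $(\varphi^*e)^\perp$ actually supports a wall. Since $\varphi^*$ preserves $\Nef(X)$, it suffices to treat $e^\perp$. The Hilbert--Chow morphism is a divisorial contraction that contracts $C_0$, and $C_0$ corresponds under BBF duality to a positive multiple of $e$. So $e^\perp\cap\Nef(X)$ is the face pulled back from $S^{(2)}$. This face contains $h_1^{[2]}$ and the full two-dimensional $\Nef(S)=\overline{\Amp(S)}$, hence it is codimension one and a genuine wall.

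For the effective cone, I will use the decomposition of Boucksom/Markman. Every effective divisor is the sum of a movable part and a nonnegative combination of prime exceptional divisors. Up to the birational automorphism group, the movable cone is covered by images of $\Nef(X)$ under birational maps. Since the $(-10)$ classes are absent, all walls of $\overline{\Pos}$ cut out by $(-2)$-classes are exceptional, so $\operatorname{Mov}(X)=\Nef(X)$ here. Any $D\in\Eff(X)$ then lies in $\Nef(X)+\operatorname{Cone}(\Aut(X)\cdot e)$. I therefore need $\Nef(X)\cap\Eff(X)\subseteq\operatorname{Cone}(\Aut(X)\cdot e)$.

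This step is the main obstacle. My approach is to show that $\operatorname{Cone}(\Aut(X)\cdot e)$ contains all rational points of $\overline{\Pos(X)}$ and that every effective class is rational. Every class in $\Eff(X)$ is a rational combination, so it suffices to treat rational classes in $\Pos$ together with the boundary. By Lemma~\ref{lem:no-rational-isotropic-cayley}, there are no rational isotropic boundary classes. The closure of $\operatorname{Cone}(\Aut\cdot e)$ contains $\overline{\Pos}$, because the orbit $\Aut\cdot e$ accumulates onto the whole isotropic boundary circle. Its dual is the cone cut out by the $(\varphi^*e)^\perp$ intersected with $\overline{\Pos}$, which is $\Nef$; this gives $\overline{\operatorname{Cone}}=\overline{\Eff}$. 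A rational class $D$ with $q(D)>0$ lies in the interior of $\overline{\Pos}$, and I expect to show it lies in a finite subcone spanned by finitely many $\varphi^*e$. To do this, I will take a rational nef $D$ and apply a local-finiteness argument: by the Amerik--Verbitsky fundamental domain, $D$ lies in a translate of a rational polyhedral domain. Its vertices are rational nef classes of positive square, or $e$-type boundary rays, and these can be written explicitly as positive combinations of finitely many orbit elements, for instance $F_0-e$ as a combination of $e$ and $\iota_0^*e$.
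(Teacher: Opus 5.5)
Your treatment of the nef walls is correct and is essentially the paper's argument. Your divisibility-two congruence modulo $16$ is what the paper uses for $a=2$; for $a=1$ it instead rules out every class of square $-10$ by a congruence modulo $4$.

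The gap is in the effective cone. Everything there rests on $\overline{\Pos(X)}\subseteq\overline C$, where $C:=\operatorname{Cone}(\Aut(X)\cdot e)$. Your justification is that the rays $\RR_{\ge0}\varphi^*e$ accumulate onto the whole isotropic boundary, and this is false.

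\begin{itemize}
\item Each $\varphi^*e$ is a positive multiple of a prime divisor class, and distinct prime divisors pair nonnegatively.
\item Hence any accumulation ray $r$ of the orbit is isotropic, lies in $\overline{\Pos(X)}$, and satisfies $(r,\psi^*e)\ge0$ for all $\psi\in\Aut(X)$.
\item By the wall description, $r$ is therefore nef.
\end{itemize}

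For instance, $h_1+\sqrt2\,e$ is isotropic and lies in $\overline{\Pos(X)}$, but $(h_1+\sqrt2\,e,e)=-2\sqrt2<0$, so its ray is not such a limit. Geometrically, $\Aut(X)$ preserves $\Nef(X)$. Its facets, such as $e^\perp\cap\Nef(X)$ (the whole of $\overline{\Pos(S)}$), are pairwise disjoint full geodesics. So the limit set of $\Aut(X)$ is a Cantor-type set, not the whole circle.

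The inclusion $\overline{\Pos(X)}\subseteq\overline C$ is nevertheless true, but it needs real input. The paper gets $\overline{\Eff}(X)=\overline C$ from Denisi's theorem~\cite{denisi2022pseudo}: for $\rho(X)\ge3$, the pseudo-effective cone is the closed cone spanned by prime exceptional divisors. It combines this with Lemma~\ref{lem:prime-exceptional-orbits}.

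Your finishing step is also circular as written. Amerik--Verbitsky give some rational polyhedral fundamental domain, but say nothing about its generators. Asserting that these ``can be written explicitly'' as nonnegative combinations of $\varphi^*e$ is exactly the claim $\Nef^e(X)\subseteq C$ for those generators. The single example $F_0-e=\tfrac12(e+\iota_0^*e)$ does not supply it.

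Once $\overline{\Pos(X)}\subseteq\overline C$ is known, no local finiteness is needed. Since $\Pos(X)$ is open and $C$ is convex, $\Pos(X)\subseteq\Int\overline C=\Int C$. Then Bayer--Macr\`i's $\Eff(X)=\operatorname{Cone}\bigl(C,\overline{\Pos(X)}\cap N^1(X)_\QQ\bigr)$~\cite[Theorem~12.4]{BM14b}, together with Lemma~\ref{lem:no-rational-isotropic-cayley}, gives $\Eff(X)=C$.

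Alternatively, you can complete your own route without Denisi by using the explicit domain $\Pi$ of Section~\ref{sec:FD}. Its covering argument does not depend on the present theorem. Its six generators lie in $C$:
\begin{itemize}
\item $h_1=\tfrac12(\iota_0^*e+3e)$ and $h_1+h_2=\tfrac12(\iota_1^*e+3e)$;
\item the other four are $\Aut(X)$-images of these two, or positive combinations of such images.
\end{itemize}
This gives $\Nef(X)\cap\Pos(X)\subseteq C$, which your Zariski-decomposition reduction then needs.
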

\begin{proof}
By Lemma~\ref{lem:hilbert-square-wall-types}, every nef wall of $X$ is orthogonal either to a primitive integral class of square $-2$ or to one of square $-10$ and divisibility two. We first show that the second case cannot occur. If $q(x,y,z)=-10$, then $2(x^2+xz-z^2)-y^2=-5$. Reducing modulo $2$ shows that $y$ is odd, and reducing modulo $8$ then gives $x^2+xz-z^2\equiv2\pmod4$, whereas this quadratic form takes only the values $0,1,3$ modulo $4$. Thus every nef wall is orthogonal to a primitive integral class of square $-2$.

By Lemma~\ref{lem:minus-two-wall-exceptional}, the square $-2$ class defining such a wall may be chosen to span a prime exceptional ray. Lemma~\ref{lem:prime-exceptional-orbits} therefore shows that every nef wall is orthogonal to a class in $\Aut(X)\cdot e$. Conversely, $e^\perp$ is the nef wall corresponding to the Hilbert--Chow contraction, and automorphisms preserve the nef cone, so every hyperplane orthogonal to a class in $\Aut(X)\cdot e$ is a nef wall. This proves the description of the nef walls.

To describe the effective cone, set $C:=\operatorname{Cone}(\Aut(X)\cdot e)$. Since $\rho(X)=3$ and $B$ is prime exceptional, it follows from \cite[Theorem~1.2]{denisi2022pseudo} and Lemma~\ref{lem:prime-exceptional-orbits} that $\overline{\Eff}(X)=\overline C$. Bayer--Macr\`i~\cite[Theorem~12.4]{BM14b} also gives
\[
\Eff(X)=\operatorname{Cone}\bigl(C,\,\overline{\Pos(X)}\cap N^1(X)_\QQ\bigr).
\]
In particular, every rational class in $\Pos(X)$ lies in $\overline C$. Since the rational classes are dense in $\Pos(X)$, we obtain $\Pos(X)\subseteq\overline C$. As $\Pos(X)$ is open, $\Pos(X)\subseteq\Int(\overline C)$. Since $C$ is convex, $\Int(\overline C)=\Int(C)$, and therefore $\Pos(X)\subset C$. Lemma~\ref{lem:no-rational-isotropic-cayley} excludes nonzero rational isotropic classes, so the additional rational generators above are already contained in $C$. Therefore $\Eff(X)=C$.
\end{proof}

\begin{thm}\label{thm:nef-cone-a2}
Let $X=S_2^{[2]}$. The walls of $\Nef(X)$ are precisely the hyperplanes orthogonal to the classes in $\Aut(X)\cdot e$. Moreover,
\[
\begin{gathered}
\overline{\Eff}(X)
=\overline{\operatorname{Cone}\bigl(\Aut(X)\cdot e\bigr)},\\
\Eff(X)
=\operatorname{Cone}\left(\Aut(X)\cdot e,\,\left\{D\in N^1(X)_\QQ\;\middle|\;q(D)=0,\ D\in\overline{\Pos(X)}\right\}\right).
\end{gathered}
\]
\end{thm}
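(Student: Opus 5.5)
The proof follows that of Theorem~\ref{thm:nef_cone_of_S^[2]}, with two changes: the exclusion of $(-10)$-walls needs a different congruence argument, and the effective cone now has isotropic generators. For $a=2$ the form is $q(x,y,z)=4x^2+8xz-4z^2-2y^2$.

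\textbf{Step 1: no $(-10)$-walls.} By Lemma~\ref{lem:hilbert-square-wall-types}, a nef wall is orthogonal either to a primitive class of square $-2$ or to a primitive class of square $-10$ with divisibility two in $H^2(X,\ZZ)$. Suppose $q(x,y,z)=-10$. Then
\[
2(x^2+2xz-z^2)-y^2=-5 ,
\]
so $y$ is odd and $y^2\equiv1\pmod 8$. This gives $x^2+2xz-z^2\equiv-2\pmod 4$, that is, $(x+z)^2-2z^2\equiv2\pmod4$.

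\begin{itemize}
\item If $z$ is even, then $2z^2\equiv0\pmod 4$, so we would need $(x+z)^2\equiv2\pmod 4$. This is impossible.
\item If $z$ is odd, then $2z^2\equiv2\pmod 4$, so $(x+z)^2\equiv0\pmod4$. Hence $x+z$ is even and $x$ is odd.
\end{itemize}

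So the congruence alone does not rule out the second case. I would then use divisibility. We have $(\rho,\lambda)\in2\ZZ$ for all $\lambda\in H^2(S,\ZZ)$. Since $H^2(S,\ZZ)$ is unimodular, this means $xh_1+zh_2$ is divisible by $2$ in $H^2(S,\ZZ)$. Primitivity of $\NS(S)$ in $H^2(S,\ZZ)$ then forces $x$ and $z$ to be even, which contradicts $z$ odd. With this, the $(-10)$ case is excluded. In fact the divisibility argument alone suffices for any $a$.

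\textbf{Step 2: the nef walls.} Every wall is orthogonal to a primitive $(-2)$-class. By Lemma~\ref{lem:minus-two-wall-exceptional} this class spans a prime exceptional ray, and by Lemma~\ref{lem:prime-exceptional-orbits} (valid for $a=2$) it is a ray of $\Aut(X)\cdot e$. Conversely, $e^\perp$ is the Hilbert--Chow wall, and automorphisms preserve $\Nef(X)$. So the walls are exactly the hyperplanes orthogonal to $\Aut(X)\cdot e$.

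\textbf{Step 3: the pseudo-effective cone.} Since $\rho(X)=3$, \cite[Theorem~1.2]{denisi2022pseudo} together with Lemma~\ref{lem:prime-exceptional-orbits} gives
\[
\overline{\Eff}(X)=\overline{C},\qquad C:=\operatorname{Cone}(\Aut(X)\cdot e).
\]

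\textbf{Step 4: the effective cone.} Bayer--Macr\`i~\cite[Theorem~12.4]{BM14b} give
\[
\Eff(X)=\operatorname{Cone}\bigl(C,\ \overline{\Pos(X)}\cap N^1(X)_\QQ\bigr).
\]
As in the $a=1$ proof, density of rational classes and convexity give $\Pos(X)\subset\Int(\overline C)=\Int(C)\subset C$. Therefore the rational points of $\overline{\Pos(X)}$ lie in the stated cone: interior points lie in $C$, and boundary rational points are exactly the rational isotropic classes in $\overline{\Pos(X)}$. This yields the displayed formula.

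\textbf{Main obstacle.} I expect the hardest point to be Step 1. The mod-$8$ argument used for $a=1$ does not close here, because the odd-$z$ case survives, so the divisibility-two condition has to be used genuinely. One must check carefully that divisibility two in $H^2(X,\ZZ)=H^2(S,\ZZ)\oplus\ZZ e$ forces $2\mid x$ and $2\mid z$. This is where unimodularity of $H^2(S,\ZZ)$ and primitivity of $\NS(S)\subset H^2(S,\ZZ)$ enter.
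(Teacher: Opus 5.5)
Your proposal is correct and follows the paper's proof essentially step for step. It uses the same reduction via Lemma~\ref{lem:hilbert-square-wall-types}, Lemma~\ref{lem:minus-two-wall-exceptional} and Lemma~\ref{lem:prime-exceptional-orbits} for the walls, and the same Denisi/Bayer--Macr\`i density argument for the (pseudo-)effective cone. The only difference, a cosmetic one, is the order of the steps excluding $(-10)$-walls: the paper first uses divisibility two, unimodularity and primitivity to get $x,z$ even and then obtains $y^2\equiv5\pmod 8$, whereas you first derive $z$ odd from the congruence and then contradict it with that same divisibility argument.
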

\begin{proof}
By Lemma~\ref{lem:hilbert-square-wall-types}, it remains to exclude the case $q(\rho)=-10$ with divisibility two. Suppose that a nef wall is of the form $\rho^\perp$, where $\rho=xh_1+ye+zh_2$ is primitive, $q(\rho)=-10$, and $\rho$ has divisibility two. Then $(xh_1+zh_2,\eta)\in2\ZZ$ for every $\eta\in H^2(S_2,\ZZ)$, so unimodularity of the K3 lattice implies $(xh_1+zh_2)/2\in H^2(S_2,\ZZ)$. Since this class is of type $(1,1)$, it lies in $\NS(S_2)$. Hence $x$ and $z$ are even. But $2x^2+4xz-2z^2-y^2=-5$, which modulo $8$ gives $y^2\equiv5$, a contradiction.

Thus every nef wall is orthogonal to a primitive integral class of square $-2$. The same argument as in the proof of Theorem~\ref{thm:nef_cone_of_S^[2]}, using Lemma~\ref{lem:minus-two-wall-exceptional} and Lemma~\ref{lem:prime-exceptional-orbits}, gives the stated description of the nef walls.

The effective cone argument in the same proof gives the stated formulas, except that rational isotropic classes on the boundary of $\overline{\Pos(X)}$ are not excluded here.
\end{proof}

For $a=1,2$, since $\iota_0^*e=2h_1-3e$ and $P$ fixes $e$, the relation $\iota_k^*=P^k\iota_0^*P^{-k}$ gives $\iota_k^*e=2F_k-3e$. By the Binet formula, the coefficients $a_{2k}$ are strictly increasing with $k$, so the classes $F_k$ are pairwise distinct. Since every $\iota_k^*e$ has $e$-coefficient $-3$, the classes $\iota_k^*e$ are pairwise nonproportional. By Theorems~\ref{thm:nef_cone_of_S^[2]} and~\ref{thm:nef-cone-a2}, their orthogonal hyperplanes are therefore distinct nef walls. Hence $\Nef(S_a^{[2]})$ is not polyhedral for $a=1,2$; in particular, $S_a^{[2]}$ is not a Mori dream space.

\section{Fundamental domain}\label{sec:FD}
Let $X=S_1^{[2]}$. We now explicitly construct a rational polyhedral fundamental domain for the action of $\Aut(X)$ on the nef effective cone. The corresponding existence statement for $\Nef^+(X)$ is recalled in Section~\ref{subsec:Cone_conjecture}.

By Lemma~\ref{lem:no-rational-isotropic-cayley}, $N^1(X)_\QQ$ has no nonzero isotropic class. Hence every nonzero rational nef class has positive square, and by convexity of the positive cone every nonzero class in $\Nef^+(X)$ lies in $\Pos(X)$. Therefore
\[
\Nef^e(X)\subseteq\Nef^+(X)
\subseteq \{0\}\cup\bigl(\Nef(X)\cap\Pos(X)\bigr),
\]
where the first inclusion is recalled in Section~\ref{subsec:Cone_conjecture}. Conversely, every nef class of positive square is big and hence effective. Thus
\[
\Nef^+(X)=\Nef^e(X)
=\{0\}\cup\bigl(\Nef(X)\cap\Pos(X)\bigr).
\]

For a class $p=xh_1+ye+zh_2$, set $\ell_1:=x-z$, $\ell_2:=z$, and $\ell_3:=3x+2y$. The classes $e$, $\iota_0^*e=(2,-3,0)$, and $\iota_1^*e=(2,-3,2)$ give, respectively, the following three inequalities on the nef cone:
\[
3\ell_1+3\ell_2-\ell_3\ge0, \qquad -\ell_1+3\ell_2+3\ell_3\ge0, \qquad 3\ell_1-\ell_2+3\ell_3\ge0.
\]

A nef class cannot have two negative $\ell$-coordinates. Indeed, if $\ell_i,\ell_j<0$ for distinct $i,j$, and $k$ is the remaining index, the corresponding inequality gives $\ell_k\le3(\ell_i+\ell_j)<0$. Thus all three would be negative, contradicting the inequality $5(\ell_1+\ell_2+\ell_3)\ge0$ obtained by summing the three inequalities. Hence the nef effective cone is partitioned into the four regions
\[
\begin{cases}
(1)\ \ell_1<0,\\[2pt]
(2)\ \ell_2<0,\\[2pt]
(3)\ \ell_3<0,\\[2pt]
(4)\ \ell_1,\ell_2,\ell_3\ge0.
\end{cases}
\]
We denote region $(4)$ by $\Pi$.
\begin{thm}\label{thm:fund_domain}
The cone $\Pi$ is a rational polyhedral fundamental domain for the action of $\Aut(X)$ on $\Nef^e(X)$.
\end{thm}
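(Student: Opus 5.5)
The plan is the standard ping-pong argument for a free product of three involutions acting on a cone cut into four regions: Π is the central region, and each involution moves it into one of the three outer regions.

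\emph{Generators and the regions.} I would fix three involutions $\tau_1,\tau_2,\tau_3$ generating $\Aut(X)\cong\ZZ_2*\ZZ_2*\ZZ_2$. I would take them from Lee's description, expressed through the Beauville involutions $\iota_k$ and $P$. For each $\tau_i$ I would compute its action in the coordinates $(\ell_1,\ell_2,\ell_3)$, which is a routine matrix conjugation. The goal is to verify two facts.
\begin{enumerate}
\item[(a)] $\tau_i$ maps the face $\{\ell_i=0\}\cap\Pi$ to itself.
\item[(b)] $\tau_i$ maps $\Pi$ into the closure of region $(i)$, and more generally maps $\Nef^e(X)\cap\{\ell_i\ge0\}$ into $\{\ell_i\le0\}$.
\end{enumerate}
I would then check that $\Pi$ is actually contained in $\Nef^e(X)$. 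By Section~\ref{sec:FD}, $\Nef^e(X)=\{0\}\cup(\Nef(X)\cap\Pos(X))$. The cone $\Pi$ is simplicial and rational, with three extremal rays given by $\ell_i=\ell_j=0$. I would check that these rays have positive square and are nef. Nefness follows from Theorem~\ref{thm:nef_cone_of_S^[2]}: by (a) and (b) the walls adjacent to $\Pi$ are $e^\perp,(\iota_0^*e)^\perp,(\iota_1^*e)^\perp$ or their $\tau_i$-images, and I expect a short orbit argument to show that every other class $g^*e$ pairs nonnegatively with the three rays. This also shows that $\Pi$ is rational polyhedral.

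\emph{Covering.} Fix a rational class $A$ in the interior of $\Pi$ and use the height $h(D):=(D,A)$. Let $D$ be a nonzero class in $\Nef^e(X)$ that lies in some region $(i)$, so $\ell_i(D)<0$. I would show $h(\tau_iD)<h(D)$; by (b) this should be a one-line identity in the $\ell$-coordinates. If $D$ is rational, $h$ takes values in $\tfrac1N\ZZ_{>0}$ along the orbit, so after finitely many steps the class lands in $\Pi$. For real $D$ I would instead use discreteness of the orbit $\Aut(X)\cdot A$ inside $\Pos(X)$: among the finitely many $g$ with $(g D,A)$ below a given bound, choose one minimizing $h$. Then $gD$ cannot lie in any region $(i)$, hence lies in $\Pi$.

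\emph{Disjointness of interiors.} I would use ping-pong. Write a nontrivial reduced word $g=\tau_{i_1}\cdots\tau_{i_m}$ with consecutive letters distinct. By induction on $m$, $g(\Int\Pi)$ lies in region $(i_1)$: property (b) sends $\Int\Pi$ and every region $(j)$ with $j\ne i$ into region $(i)$ under $\tau_i$. Since the four regions are disjoint, $g\Int\Pi\cap\Int\Pi=\emptyset$ for $g\ne1$. This also recovers that $\Aut(X)$ acts faithfully on $N^1(X)$, which is needed to call $\Pi$ a fundamental domain.

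\emph{Main obstacle.} The hard part is the explicit step (b): showing that each $\tau_i$ maps every region $(j)$ with $j\ne i$ into region $(i)$, and not merely $\Pi$. This is an inequality between linear forms that holds only on $\Nef^e(X)$, not on all of $N^1(X)_\RR$. I would prove it using the three defining inequalities of the nef cone listed above together with the fact, shown above, that no nef class has two negative $\ell$-coordinates.
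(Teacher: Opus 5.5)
Your skeleton is the paper's: a height that drops when $\tau_i$ is applied on region $(i)$ gives covering, and ping-pong on reduced words gives disjointness. However, two steps fail as you wrote them.

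\textbf{First gap: $\Pi$ is not simplicial.} The rays $\{\ell_i=\ell_j=0\}$ are spanned by $\iota_0^*e$, $\iota_1^*e$ and $e$, whose $\ell$-coordinates are $(2,0,0)$, $(0,2,0)$ and $(0,0,2)$. Each has square $-2$ and pairs negatively with itself, so it violates one of the three displayed nef inequalities. Your check that ``these rays have positive square and are nef'' therefore cannot succeed.

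Region $(4)$ is $\{\ell_i\ge0\}$ cut further by the walls $e^\perp$, $(\iota_0^*e)^\perp$ and $(\iota_1^*e)^\perp$. It has six rays, the permutations of $(0,1,3)$ in $\ell$-coordinates: $h_1$, $h_1+h_2$, $3h_1-4e$, $3h_1-4e+3h_2$, $4h_1-6e+h_2$ and $4h_1-6e+3h_2$. The real content is that the cone cut out by these six inequalities lies in $\Nef^e(X)$. No orbit argument over all walls $g^*e$ is needed. The classes $h_1$ and $h_1+h_2$ are nef, and $3h_1-4e=\iota_0^*h_1$ and $3h_1-4e+3h_2=\iota_1^*(h_1+h_2)$. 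The last two rays are positive combinations of automorphic images of these.

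\textbf{Second gap: the height decrease fails for a general rational $A\in\Int\Pi$.} We have $h(\tau_iD)-h(D)=(D,\tau_iA-A)$, and property (b) does not control this quantity. In $\ell$-coordinates the BBF form is $-\tfrac12\sum_i\ell_i^2+3\sum_{i<j}\ell_i\ell_j$. For $A=(a_1,a_2,a_3)$ this gives $h(\iota_1^*D)-h(D)=10a_1\ell_1(D)+2(a_2-a_3)\bigl(\ell_2(D)-\ell_3(D)\bigr)$.

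For a concrete counterexample, take $A=8h_1-11e+6h_2$, which has $\ell=(2,6,2)$ and lies in $\Int\Pi$. Take $D=\iota_1^*h_1=5h_1-6e+6h_2$, which is nef and lies in region $(1)$. Then $(D,A)=40<44=(\iota_1^*D,A)$, so minimizing $h$ over an orbit does not force the minimizer into $\Pi$.

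The identity you want holds only for $A$ proportional to $4h_1-5e+2h_2$, where $\ell_1=\ell_2=\ell_3$. For this $A$, $(p,A)=10(2x+y)$ and $h(\tau_ip)-h(p)=20\ell_i(p)$; this is exactly the paper's function $f$. With this choice, your discreteness argument for real classes is a valid substitute for the paper's $\delta_m\to0$ estimate.

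\textbf{Choice of generators.} You must take $\tau_1=\iota_1$, $\tau_2=\iota_0$ and $\tau_3=\iota_1\iota_2\iota_1$, not Lee's $\iota_2$ itself. Indeed, $\iota_2^*$ fixes $F_2-e$, which has $\ell_3(F_2-e)=4\neq0$, so $\iota_2^*$ cannot reverse $\ell_3$. For the correct generators, $\ell_i(\tau_ip)=-\ell_i(p)$ on all of $N^1(X)_\RR$. Your step (b), which you flagged as the main obstacle, then follows at once from the fact that no nef class has two negative $\ell$-coordinates.
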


\begin{proof}
Define the following transformations of $N^1(X)$:
\[
A_1:=\iota_1^*,\qquad A_2:=\iota_0^*,\qquad A_3:=\iota_1^*\iota_2^*\iota_1^*.
\]
We first describe $\Pi$ explicitly. Let $C$ be the cone defined by the three nef inequalities above together with $\ell_1,\ell_2,\ell_3\ge0$. Since $\Pi$ is region $(4)$, every class in $\Pi$ satisfies these six inequalities, so $\Pi\subseteq C$. The six defining hyperplanes of $C$ are
\[
x-z=0,\quad z=0,\quad 3x+2y=0,\quad 8x+6y+4z=0,\quad 12x+6y-4z=0,\quad y=0.
\]
A direct calculation shows that $C$ is generated by the six rays
\[
h_1,\quad h_1+h_2,\quad 3h_1-4e,\quad 3h_1-4e+3h_2,\quad 4h_1-6e+h_2,\quad 4h_1-6e+3h_2.
\]
All six rays lie in $\Nef^e(X)$. Indeed, $h_1=F_0$ and $h_1+h_2=F_1$ are nef and effective. The classes $3h_1-4e=A_2h_1$ and $3h_1-4e+3h_2=A_1(h_1+h_2)$ are their images under automorphisms. The remaining two are positive combinations of such classes, since $3(4h_1-6e+h_2)=A_2h_1+A_3A_1(h_1+h_2)$ and $3(4h_1-6e+3h_2)=A_1(h_1+h_2)+A_3A_2h_1$. Hence $C\subseteq\Nef^e(X)$. Since $\ell_1,\ell_2,\ell_3\ge0$ on $C$, we also have $C\subseteq\Pi$. Therefore $C=\Pi$, and $\Pi$ is rational polyhedral.

We next prove that the translates of $\Pi$ cover $\Nef^e(X)$. By the description of $\Nef^e(X)$ above and the fact that $0\in\Pi$, it remains to consider a nef class $p=(x,y,z)$ with $q(p)>0$. If $p\notin\Pi$, then $p$ lies in a unique region $(i)$ with $i\in\{1,2,3\}$. In the coordinates $(\ell_1,\ell_2,\ell_3)$, the transformations $A_i$ act by
\[
\begin{aligned}
A_1&:(\ell_1,\ell_2,\ell_3)\longmapsto(-\ell_1,3\ell_1+\ell_3,3\ell_1+\ell_2),\\
A_2&:(\ell_1,\ell_2,\ell_3)\longmapsto(3\ell_2+\ell_3,-\ell_2,\ell_1+3\ell_2),\\
A_3&:(\ell_1,\ell_2,\ell_3)\longmapsto(\ell_2+3\ell_3,\ell_1+3\ell_3,-\ell_3).
\end{aligned}
\]
In particular, $A_i$ reverses the sign of $\ell_i$. We therefore apply $A_i$ whenever the current class lies in region $(i)$.

The function $f(x,y,z):=2x+y=\frac12(\ell_1+\ell_2+\ell_3)$ satisfies
\[
f(A_1p)-f(p)=2\ell_1,\qquad f(A_2p)-f(p)=2\ell_2,\qquad f(A_3p)-f(p)=2\ell_3,
\]
so $f$ strictly decreases at each step of the reduction.

We claim that $f(p)>0$ for every nef class $p$ of positive square. Summing the three nef inequalities gives $5(\ell_1+\ell_2+\ell_3)\ge0$, so $f(p)\ge0$. If $f(p)=0$, then the three nonnegative left-hand sides of the nef inequalities have sum zero, so they all vanish. Hence $\ell_1=\ell_2=\ell_3=0$, contradicting $q(p)>0$. Thus $f(p)>0$, so $f$ remains positive throughout the reduction.

Suppose that the reduction does not terminate, and set $p_0=p$. Write $p_{m+1}=A_{i_m}p_m$, where $p_m$ lies in region $(i_m)$. Since the positive sequence $f(p_m)$ is strictly decreasing, it converges. Set
\[
\delta_m:=f(p_m)-f(p_{m+1})
=-2\ell_{i_m}(p_m)>0.
\]
Then $\delta_m\to0$. Since $A_{i_m}$ reverses the sign of $\ell_{i_m}$, we have $i_{m+1}\ne i_m$.

Fix $m$, set $i=i_m$ and $j=i_{m+1}$, and let $k$ be the remaining index. Since $p_m$ lies in region $(i)$, we have $\ell_j(p_m),\ell_k(p_m)\ge0$. Moreover,
\[
-\frac{\delta_{m+1}}2
=\ell_j(p_{m+1})
=3\ell_i(p_m)+\ell_k(p_m),
\]
so
\[
0\le \ell_k(p_m)
=\frac{3\delta_m-\delta_{m+1}}2
\le \frac32\delta_m.
\]
The nef inequality with coefficient $-1$ on $\ell_j$ then gives
\[
0\le \ell_j(p_m)
\le 3\bigl(\ell_i(p_m)+\ell_k(p_m)\bigr)
\le 3\delta_m.
\]
Together with $\lvert\ell_i(p_m)\rvert=\delta_m/2$, this shows that all three $\ell$-coordinates of $p_m$ tend to zero. Hence $q(p_m)\to0$, contradicting $q(p_m)=q(p_0)>0$, since the transformations $A_i$ preserve the BBF form. Thus the reduction terminates after finitely many steps in region $(4)=\Pi$. Therefore every class in $\Nef^e(X)$ can be carried into $\Pi$ by an automorphism, and
\[
\Nef^e(X)=\bigcup_{\varphi\in\Aut(X)}\varphi^*(\Pi).
\]

It remains to prove that distinct translates have disjoint interiors. Fix $i\in\{1,2,3\}$. If $p$ lies in the interior of region $(j)$ with $j\ne i$, then $\ell_i(p)>0$, while the formulas above give $\ell_i(A_i p)=-\ell_i(p)<0$. Since $A_i$ is invertible and preserves $\Nef^e(X)$, it preserves its interior. Hence $A_i p$ lies in the interior of region $(i)$. In particular, $A_i(\Int\Pi)$ lies in the interior of region $(i)$.

By~\cite{Lee25}, $\Aut(X)=\langle\iota_0,\iota_1,\iota_2\rangle \cong\ZZ_2*\ZZ_2*\ZZ_2$. Replacing $\iota_2$ by its conjugate $\iota_1\iota_2\iota_1$ gives another free generating set. Indeed, the map fixing $\iota_0,\iota_1$ and sending $\iota_2$ to $\iota_1\iota_2\iota_1$ is an automorphism of the free product.

Hence, for every nonidentity automorphism $\varphi\in\Aut(X)$, we may write $(\varphi^*)^{-1}=A_{i_k}\cdots A_{i_1}$ as a nonempty reduced word, so $i_{r+1}\ne i_r$ for $1\le r<k$. Starting from $\Int(\Pi)$ and applying the factors from right to left, the successive images lie in the interiors of regions $(i_1),\ldots,(i_k)$. Thus $(\varphi^*)^{-1}\Int(\Pi)$ lies in the interior of region $(i_k)$, which is disjoint from $\Int(\Pi)$, and hence
\[
(\varphi^*)^{-1}\Int(\Pi)\cap\Int(\Pi)=\varnothing
\]
for every nonidentity $\varphi\in\Aut(X)$. Applying this to the automorphism relating any two distinct translates shows that their interiors are pairwise disjoint. Hence $\Pi$ is a fundamental domain.
\end{proof}

\begin{figure}[htbp]
 \centering
 \includegraphics[width=\textwidth]{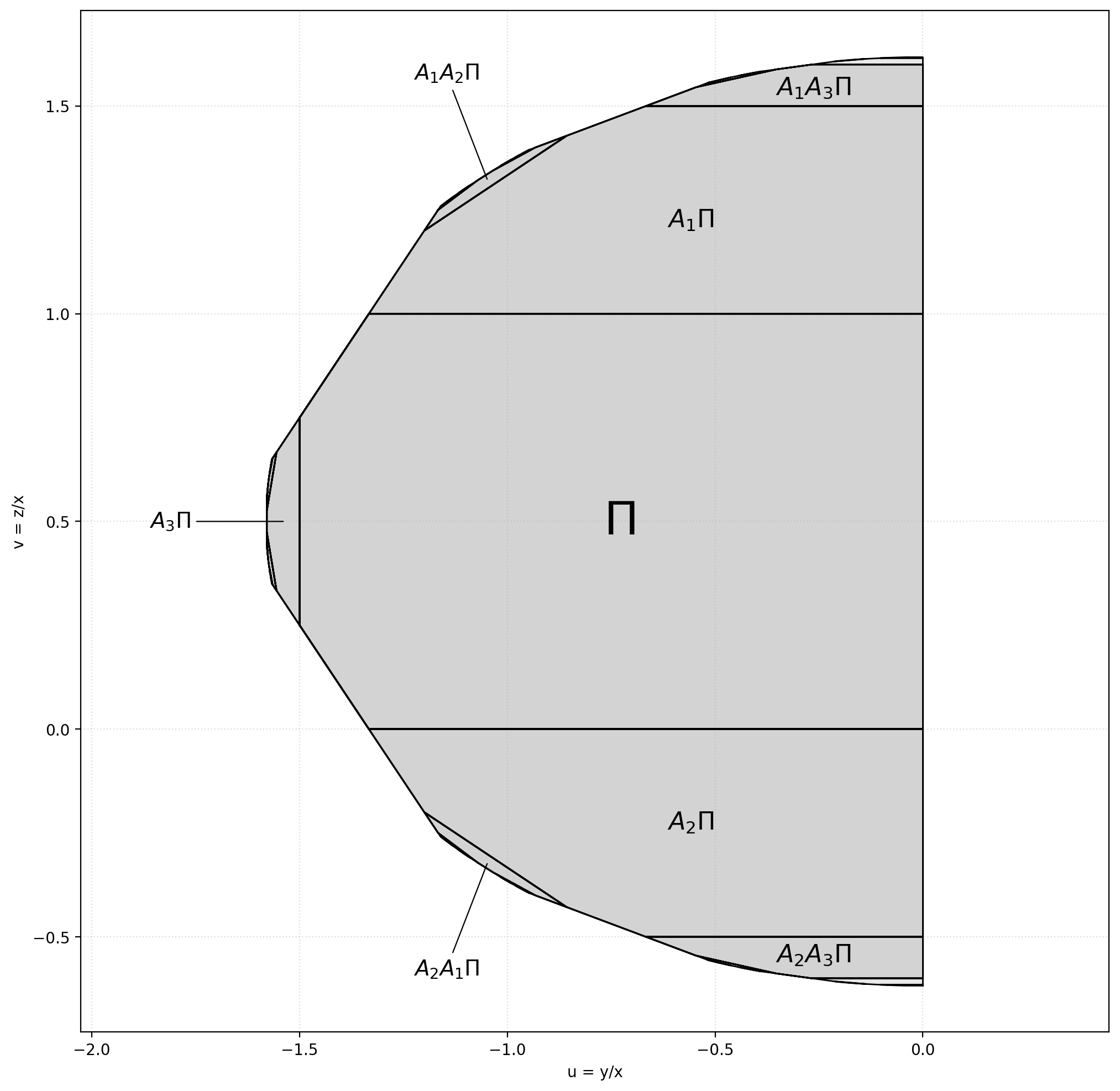}
 \caption{The $x=1$ slice of the nef cone of $S_1^{[2]}$ in $(y,z)$-coordinates, where classes are written as $xh_1+ye+zh_2$}
 \label{fig:FD}
\end{figure}

%\FloatBarrier

\section{Nef and Mori cones of higher Hilbert schemes}\label{sec:higher-2}
Starting from the classes $F_k$, we construct a chain of ample classes $E_{k,t}$ that will be used to describe the Mori cone and define the divisor classes $D_{k,t}$.

Let $a\ge1$ and $n\ge2$, and set $X=S_a^{[n]}$. Recall that $F_k:=a_{2k-1}h_1+a_{2k}h_2$ for $k\in\ZZ$. Set $G_k:=a_{2k}h_1+a_{2k+1}h_2=(F_{k+1}-F_k)/a$. By Proposition~\ref{prop:general-Fibonacci-identities}, $a_{2k-1}a_{2k+1}-a_{2k}^2=1$, so $(F_k,G_k)$ is an integral basis of $\NS(S_a)$.

For $k\in\ZZ$ and integers $0\le t\le a$, set
\[
E_{k,t}:=F_k+tG_k =\frac{a-t}{a}F_k+\frac{t}{a}F_{k+1}.
\]
Since $E_{k,0}=F_k$ and $E_{k,a}=F_{k+1}=E_{k+1,0}$, the classes $E_{k,t}$ form a chain joining $F_k$ to $F_{k+1}$. For $0\le t\le a-1$, the change-of-basis matrix from $(F_k,G_k)$ to $(E_{k,t},E_{k,t+1})$ has determinant $1$, so every such pair is an integral basis.

Using the intersection numbers of $F_k$ and $F_{k+1}$ computed in Section~\ref{sec:Nef_cone}, we have $E_{k,s}\cdot E_{k,t}=4+2a(s+t)-4st$ for $0\le s,t\le a$. In particular, $E_{k,t}^2=4(1+at-t^2)$. By adjunction, the arithmetic genus of a curve in $|E_{k,t}|$ is $\gamma_t:=1+\frac{E_{k,t}^2}{2}=3+2t(a-t)$, independent of $k$. For $0\le t\le a-1$, the same formula gives $E_{k,t}\cdot E_{k,t+1}=\gamma_t+\gamma_{t+1}$.

Since $E_{k,t}$ is a convex combination of the ample classes $F_k$ and $F_{k+1}$, it is ample. It is primitive because $(F_k,G_k)$ is an integral basis and $E_{k,t}=F_k+tG_k$. Together with the absence of isotropic and $(-2)$-classes in $\NS(S_a)$, this excludes the exceptional cases in Saint-Donat's criterion~\cite{SD74}, so every $E_{k,t}$ is very ample.

Set $N:=n-1$. If a smooth curve $C\in|E_{k,t}|$ admits a $g^1_n$, let $C_{[n]}\subset S_a^{[n]}$ be the associated curve. Although $C_{[n]}$ may depend on the choices of $C$ and the pencil, Lemma~\ref{lem:pencil-mukai} shows that its numerical class depends only on $E_{k,t}$; we denote this class by $(E_{k,t})_{[n]}\in N_1(X)_\QQ$. Then, for every $D\in\NS(S_a)$,
\[
D^{[n]}\cdot(E_{k,t})_{[n]}=D\cdot E_{k,t}, \qquad e\cdot(E_{k,t})_{[n]}=N+\gamma_t.
\]

To construct a class orthogonal to two consecutive curve classes of this form, fix $k\in\ZZ$ and $0\le t\le a-1$. Since $E_{k,t}$ and $E_{k,t+1}$ form a basis of $\NS(S_a)$, there is a unique class $P_{k,t}\in N^1(S_a)_\QQ\subset N^1(X)_\QQ$ satisfying $P_{k,t}\cdot E_{k,t}=N+\gamma_t$ and $P_{k,t}\cdot E_{k,t+1}=N+\gamma_{t+1}$. Set $D_{k,t}:=P_{k,t}-e$. By construction, whenever both corresponding pencils exist, $D_{k,t}\cdot(E_{k,t})_{[n]}=D_{k,t}\cdot(E_{k,t+1})_{[n]}=0$.

\begin{lem}\label{lem:mukai-wall-inequality}
For every $k\in\ZZ$ and $0\le t\le a-1$, the class $D_{k,t}$ is nef. If a curve class $\theta_v^\vee(w)\in N_1(X)_\QQ$ appearing in Theorem~\ref{thm:BM-nef-mori} is orthogonal to $D_{k,t}$, then
\[
w=v\bigl(\Ocal_{S_a}(-E_{k,t})\bigr) \qquad\text{or}\qquad w=v\bigl(\Ocal_{S_a}(-E_{k,t+1})\bigr).
\]
\end{lem}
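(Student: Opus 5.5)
We plan to verify nefness directly against the Bayer--Macr\`i criterion of Theorem~\ref{thm:BM-nef-mori}, working entirely in the Mukai lattice. First we compute the Mukai vector of $D_{k,t}$. With $v=(1,0,-N)$, the formulas $\theta_v(0,-L,0)=L^{[n]}$ and $\theta_v(1,0,N)=-e$ give
\[
u:=\theta_v^{-1}(D_{k,t})=(1,-P_{k,t},N)\in v^\perp\otimes\QQ .
\]
For $w=(r,c,s)$ this yields
\[
D_{k,t}\cdot\theta_v^\vee(w)=(u,w)=-P_{k,t}\cdot c-s-rN,
\qquad (v,w)=rN-s .
\]
By \eqref{eq:dual-mukai} and Theorem~\ref{thm:BM-nef-mori}, nefness of $D_{k,t}$ is equivalent to $\Phi(w):=-P_{k,t}\cdot c-s-rN\ge0$ for every $w$ with $w^2=c^2-2rs\ge-2$, $|rN-s|\le N$, and $\theta_v^\vee(w)\cdot H>0$ for a fixed ample $H$. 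We also need $D_{k,t}\in\overline{\Pos(X)}$, but this is automatic once $D_{k,t}$ pairs nonnegatively with all of $\NEbar(X)$. As a sanity check, Lemma~\ref{lem:pencil-mukai} shows that $w=(1,-E_{k,t+\epsilon},\gamma_{t+\epsilon})$ for $\epsilon\in\{0,1\}$ gives $\Phi=0$, by the defining property of $P_{k,t}$.

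Next we write $c=-(\lambda E_{k,t}+\mu E_{k,t+1})$ with $\lambda,\mu\in\ZZ$. This is possible because $(E_{k,t},E_{k,t+1})$ is an integral basis. Then
\[
-P_{k,t}\cdot c=\lambda(N+\gamma_t)+\mu(N+\gamma_{t+1}),
\qquad
c^2=(\gamma_t-1)2\lambda^2+2\lambda\mu(\gamma_t+\gamma_{t+1})+(\gamma_{t+1}-1)2\mu^2 .
\]
We then split into cases according to the sign of $r$, replacing $w$ by $-w$ when needed to arrange $\theta_v^\vee(w)\cdot H>0$ together with a normalization.

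In the case $r=0$, we have $|s|\le N$ and $c^2\ge-2$. Since $\NS(S_a)$ has no $(-2)$-classes, this forces $c^2\ge0$, so $\pm c$ lies in the closed positive cone. The positivity condition against $H$ then fixes the sign, so that $-c$ is positive. Consequently $-P_{k,t}\cdot c\ge\min(\lambda,\mu)$-type lower bounds of size $\gg N$ beat $s\le N$, and $\Phi>0$ strictly.

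In the case $r\ge1$ (and symmetrically $r\le-1$), we use $s\le (c^2+2)/(2r)$ and $s\ge (r-1)N$ to obtain
\[
\Phi(w)\ \ge\ \lambda(N+\gamma_t)+\mu(N+\gamma_{t+1})-rN-\min\Bigl\{\tfrac{c^2+2}{2r},\,(r+1)N\Bigr\}.
\]
We then show that this quantity is nonnegative on the lattice, with equality only at $r=1$ and $(\lambda,\mu)\in\{(1,0),(0,1)\}$. These values force $s=\gamma_t$ or $\gamma_{t+1}$ respectively, and therefore give exactly the two listed Mukai vectors. The rigidity statement follows by tracking which of these inequalities are sharp.

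The main obstacle is this last lattice minimization. The form $c^2$ is indefinite, so the bound coming from $w^2\ge-2$ alone is not convex. We therefore expect to combine it with the wall-range constraint $s\le(r+1)N$, and this is where the standing hypothesis $n\ge\lfloor a^2/4\rfloor+3$ (i.e.\ $N>\max_t\gamma_t/2$ roughly, since $\gamma_t\le 3+a^2/2$) will be used. Our first approach is to reduce to $r=1$ by showing that larger $|r|$ only weakens the subtracted term. We would then view $\Phi$ as a function of $(\lambda,\mu)$ restricted to the region where $c^2\ge 2s-2\ge -2$, and check it on the boundary lines $\lambda=0$ or $\mu=0$ and on the region $\lambda,\mu\ge1$ separately. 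On the region $\lambda,\mu\ge1$ the linear term dominates once $N$ is large compared with $\gamma_t$. On the boundary lines a one-variable quadratic has its integer minima exactly at $\lambda=1$ or $\mu=1$.
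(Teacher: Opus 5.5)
\textbf{There is a genuine gap: the inequality $\Phi(w)\ge0$, which is the whole content of the lemma, is never proved, and the route you sketch for it breaks at specific points.} Your setup is correct. The vector $\theta_v^{-1}(D_{k,t})=(1,-P_{k,t},N)$, the formula for $\Phi(w)$, the expansion of $c$ in the integral basis $(E_{k,t},E_{k,t+1})$, and the two equality cases are all right. The problems are these.

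\emph{The reduction to $r=1$ is false.} Fix $(\lambda,\mu)$ and suppose $c^2+2>2r(r+1)N$. Then $s\le(r+1)N$ is the binding constraint, and your subtracted term equals $(2r+1)N$, whereas at $r=1$ it is $3N$. So the case $r=1$ does not control $r\ge2$. What compensates for large $r$ is that feasibility, $c^2\ge2r(r-1)N-2$, forces $\lambda+\mu$ to grow with $r$. Nothing in your outline couples $\lambda+\mu$ to $r$.

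\emph{Your case split misses pairs of opposite sign.} You treat the lines $\lambda=0$, $\mu=0$ and the quadrant $\lambda,\mu\ge1$. Opposite signs occur whenever $-c$ is positive but lies outside $\operatorname{Cone}(E_{k,t},E_{k,t+1})$. For $a=1$, $-c=F_{k+2}=3F_{k+1}-F_k$ gives $(\lambda,\mu)=(-1,3)$. You also never derive $\lambda+\mu>0$ for $r\ne0$. The $r=0$ step fails for the same reason: you need $P_{k,t}\cdot(-c)>N$ for every nonzero integral $-c$ in the positive cone, not only for those in the quadrant.

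\emph{The bound on $n$ is not available.} The condition $n\ge\lfloor a^2/4\rfloor+3$ is not a hypothesis of this lemma; it is imposed afterwards, only to produce the pencils. The lemma holds for all $n\ge2$, so an argument that needs the bound proves a weaker statement.

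\textbf{The missing idea is to multiply $\Phi(w)$ by $\lambda+\mu$ and eliminate $s$ using $w^2$.} Positivity of $\lambda+\mu$ comes from $(F_k+F_{k+1})\cdot E_{k,u}=2(a^2+4)$ for every $u$. The class $(F_k+F_{k+1})^{[n]}$ is pulled back from an ample class on $S_a^{(n)}$, so it is positive on $\NEbar(X)\setminus\RR_{\ge0}C_0$; on that ray one simply has $D_{k,t}\cdot C_0=1$. Set $m=(v,w)=rN-s$ and $d=\lambda+\mu-r$. A direct expansion gives
\[
(\lambda+\mu)\,\Phi(w)=Nd^2+md+\lambda^2+\mu^2+\tfrac{w^2}{2}\ \ge\ N|d|(|d|-1)+\lambda^2+\mu^2-1\ \ge\ 0 .
\]
This holds uniformly in $r$ and for every $n\ge2$.

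Equality forces $\lambda^2+\mu^2=1$, $|d|\le1$ and $w^2=-2$. Hence $(\lambda,\mu)\in\{(1,0),(0,1)\}$ and $r\in\{0,1,2\}$. Even $r$ is impossible, because then $w^2=c^2-2rs$ is divisible by $4$. This leaves $r=1$ and $s=\gamma_t$ or $\gamma_{t+1}$.

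\textbf{A smaller point on $\overline{\Pos(X)}$.} Your remark that $D_{k,t}\in\overline{\Pos(X)}$ is ``automatic once $D_{k,t}$ pairs nonnegatively with all of $\NEbar(X)$'' is circular as phrased. It is rescued only because the integral classes of $\overline{\Pos(X)}$ occur among the $\theta_v^\vee(w)$ with $(v,w)=0$ and $w^2\ge0$, so they are covered once the inequality is proved for all $w$. The paper instead checks $q(D_{k,t})>0$ and $(D_{k,t},E_{k,t}^{[n]})>0$ directly.
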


\begin{proof}
Solving the two defining intersection equations for $P_{k,t}$ gives
\[
q(D_{k,t})
=
\frac{2\bigl((\gamma_t-N)^2+(\gamma_{t+1}-N)^2+4N\bigr)}
{(\gamma_t-\gamma_{t+1})^2+4(\gamma_t+\gamma_{t+1}-1)}>0.
\]
Moreover, the pullback divisor class $E_{k,t}^{[n]}$ lies in $\Pos(X)$ and, by the defining equation for $P_{k,t}$,
\[
\bigl(D_{k,t},E_{k,t}^{[n]}\bigr)=P_{k,t}\cdot E_{k,t}=N+\gamma_t>0.
\]
Hence $D_{k,t}\in\Pos(X)$.

Since $D_{k,t}\in\Pos(X)$, it pairs positively with every nonzero class in $\overline{\Pos(X)}$. Also, $D_{k,t}\cdot C_0=1$. Thus, by the dual description in Theorem~\ref{thm:BM-nef-mori}, it remains only to consider a class $\theta_v^\vee(w)$ not lying on the Hilbert--Chow ray $\RR_{\ge0}C_0$. For such a class, $w^2\ge-2$ and $|(v,w)|\le N$. Since $E_{k,t},E_{k,t+1}$ form an integral basis of $\NS(S_a)$, write $w=(r,-pE_{k,t}-qE_{k,t+1},s)$ for $p,q,r,s\in\ZZ$. With $v=(1,0,-N)$, set $m:=(v,w)$. Then $|m|\le N$, $s=rN-m$, and hence
\[
w=(r,-pE_{k,t}-qE_{k,t+1},rN-m).
\]

Since the cone of curves contracted by the Hilbert--Chow morphism is $\RR_{\ge0}C_0$, the pullback $(F_k+F_{k+1})^{[n]}$ of an ample class is strictly positive on $\theta_v^\vee(w)$. By~\eqref{eq:dual-mukai},
\[
\begin{aligned}
0<(F_k+F_{k+1})^{[n]}\cdot\theta_v^\vee(w)
&=(F_k+F_{k+1})\cdot(pE_{k,t}+qE_{k,t+1})\\
&=2(a^2+4)(p+q).
\end{aligned}
\]
Hence $p+q>0$.

Since $\theta_v^{-1}(D_{k,t})=(1,-P_{k,t},N)\in v^\perp\otimes\QQ$, equation~\eqref{eq:dual-mukai} and the defining equations for $P_{k,t}$ give $D_{k,t}\cdot\theta_v^\vee(w)=p(N+\gamma_t)+q(N+\gamma_{t+1})-2rN+m$. Combining this with the expression for $w^2$ gives
\begin{equation}\label{eq:mukai-wall-key}
(p+q)\bigl(D_{k,t}\cdot\theta_v^\vee(w)\bigr)
=
N(p+q-r)^2+m(p+q-r)+p^2+q^2+\frac{w^2}{2}.
\end{equation}
Set $d:=p+q-r$. Since $|m|\le N$ and $w^2\ge-2$, we have $md\ge-N|d|$ and $w^2/2\ge-1$. Hence
\[
(p+q)\bigl(D_{k,t}\cdot\theta_v^\vee(w)\bigr) \ge N|d|(|d|-1)+p^2+q^2-1.
\]
Since $d,p,q\in\ZZ$ and $p+q>0$, we have $|d|(|d|-1)\ge0$ and $p^2+q^2\ge1$. Hence the right-hand side is nonnegative, so $D_{k,t}\cdot\theta_v^\vee(w)\ge0$. Together with the positivity of $D_{k,t}$ on $\overline{\Pos(X)}\setminus\{0\}$ and the equality $D_{k,t}\cdot C_0=1$, this proves that $D_{k,t}$ is nef by Theorem~\ref{thm:BM-nef-mori}.

Suppose that $D_{k,t}\cdot\theta_v^\vee(w)=0$. Equality in the lower bound above gives $p^2+q^2=1$, $|d|\le1$, and $w^2=-2$. Since $p+q>0$, we have $p+q=1$, so $d=1-r$ and hence $r\in\{0,1,2\}$. If $r$ were even, then
\[
w^2=(pE_{k,t}+qE_{k,t+1})^2-2rs
\]
would be divisible by $4$, since every square in $\NS(S_a)$ is divisible by $4$, contradicting $w^2=-2$. Thus $r=1$ and $d=0$.

Therefore $w=(1,-E_{k,t},s)$ or $w=(1,-E_{k,t+1},s)$. Since $w^2=-2$, we obtain $s=1+E_{k,t}^2/2=\gamma_t$ in the first case and $s=\gamma_{t+1}$ in the second. As $v(\Ocal_{S_a}(-E_{k,u}))=(1,-E_{k,u},\gamma_u)$ for $u=t,t+1$, we conclude that
\[
w=v\bigl(\Ocal_{S_a}(-E_{k,t})\bigr) \qquad\text{or}\qquad w=v\bigl(\Ocal_{S_a}(-E_{k,t+1})\bigr). \qedhere
\]

\end{proof}

We now impose the numerical bound needed to realize the classes $(E_{k,t})_{[n]}$ by pencils on smooth curves. Assume $n\ge\lfloor a^2/4\rfloor+3$. For every $k\in\ZZ$ and $0\le t\le a-1$, choose a smooth curve $C_{k,t}\in|E_{k,t}|$. Since $\gamma_t=3+2t(a-t)\le 3+2\lfloor a^2/4\rfloor$ and $N=n-1\ge\lfloor a^2/4\rfloor+2$, we have $\rho(\gamma_t,1,n)=2N-\gamma_t\ge1$. The Brill--Noether existence theorem~\cite[Chapter~V, Theorem~1.1]{ACGH85} therefore gives a $g^1_n$ on $C_{k,t}$. Fix one such pencil and the associated curve in $X$ representing $(E_{k,t})_{[n]}$; for the endpoint $E_{k,a}=E_{k+1,0}$, use the pencil chosen for $C_{k+1,0}$.

For $k\in\ZZ$ and $0\le u\le a$, set $w_{k,u}:=v(\Ocal_{S_a}(-E_{k,u}))=(1,-E_{k,u},\gamma_u)$. Since $\gamma_u\le2N$, we have $w_{k,u}^2=-2$ and $|(v,w_{k,u})|=|N-\gamma_u|\le N$. By the choice of pencils above and Lemma~\ref{lem:pencil-mukai}, $(E_{k,u})_{[n]}=\theta_v^\vee(w_{k,u})$. Since $(E_{k,u})_{[n]}$ is effective, $\theta_v^\vee(w_{k,u})\cdot H>0$ for every ample class $H$. Thus these classes satisfy the conditions in Theorem~\ref{thm:BM-nef-mori}. Together with Lemma~\ref{lem:mukai-wall-inequality}, this shows that, for each $k,t$, the two consecutive pencil classes $(E_{k,t})_{[n]}$ and $(E_{k,t+1})_{[n]}$ are precisely the curve classes appearing in Theorem~\ref{thm:BM-nef-mori} that are orthogonal to $D_{k,t}$.

\begin{thm}\label{thm:nef-cone-large-n}
Let $a\ge1$ and $n\ge\lfloor a^2/4\rfloor+3$, and set $X=S_a^{[n]}$. Then
\[
\NEbar(X)
=
\overline{\operatorname{Cone}\bigl\langle
C_0,\,(E_{k,t})_{[n]}
\mid k\in\ZZ,\;0\le t\le a-1
\bigr\rangle}.
\]
Moreover,
\[
\Nef(X)=\overline{\operatorname{Cone}\bigl\langle
D_{k,t}\mid k\in\ZZ,\;0\le t\le a-1
\bigr\rangle}.
\]
\end{thm}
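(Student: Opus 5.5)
The plan is to prove the Mori cone statement first and then obtain the nef cone by duality. Let $K:=\overline{\operatorname{Cone}\langle C_0,(E_{k,t})_{[n]}\rangle}$ and $D:=\overline{\operatorname{Cone}\langle D_{k,t}\rangle}$.

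\emph{The easy inclusions.} We have $K\subseteq\NEbar(X)$ because each generator is effective: $C_0$ is a Hilbert--Chow line, and $(E_{k,t})_{[n]}$ is represented by the pencil curves fixed just before the theorem. We have $D\subseteq\Nef(X)$ by Lemma~\ref{lem:mukai-wall-inequality}. Since $\Nef(X)$ is dual to $\NEbar(X)$, it suffices to show $\NEbar(X)\subseteq K$. Dualizing then gives
\[
\Nef(X)=\NEbar(X)^\vee=K^\vee,
\]
and it remains to show $K^\vee\subseteq D$.

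\emph{Local structure of $K^\vee$.} In $N^1(X)_\RR$, which is $3$-dimensional, the generators of $K$ other than $C_0$ form a chain of rays $\dots,(E_{k,t})_{[n]},(E_{k,t+1})_{[n]},\dots$. By construction, $D_{k,t}$ is orthogonal to two consecutive members of this chain and pairs to $1$ with $C_0$. For $D_{k,t}$ and $D_{k,t+1}$ (with the convention $D_{k,a}=D_{k+1,0}$ at the endpoints), the pencil class $(E_{k,t+1})_{[n]}$ is orthogonal to both. So the $D_{k,t}$ form a chain of nef classes in which consecutive pairs span the $2$-faces $(E_{k,t+1})_{[n]}^\perp\cap\Nef$. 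Also, each $D_{k,t}$ is the intersection of two walls of $K^\vee$.

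\emph{Reverse inclusion for the nef cone.} To show $K^\vee\subseteq D$, I would check that $D$ together with its limit rays exhausts $K^\vee$. As $k\to\pm\infty$, the classes $E_{k,t}$ tend projectively to the two irrational isotropic rays of $\NS(S_a)$. The normalized classes $(E_{k,t})_{[n]}/(E_{k,t}\cdot h_1)$ and $D_{k,t}$ accumulate on $\partial\overline{\Pos(X)}$. I would show that their limits lie on the boundary of the positive cone and on $C_0^\perp$, and that the closure of $\operatorname{Cone}\langle D_{k,t}\rangle$ already contains the closure of the region bounded by the chain. Then every functional in $K^\vee$ is a limit of cone combinations of consecutive $D$'s.

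\emph{The key containment $\NEbar(X)\subseteq K$.} This is the main step, via the dual description in Theorem~\ref{thm:BM-nef-mori}. By that theorem, $\NEbar(X)$ is generated by $\overline{\Pos(X)}$ together with the classes $\theta_v^\vee(w)$. It therefore suffices to show that every such generator pairs nonnegatively with each $D_{k,t}$ and that $D^\vee=K$. The first point is essentially Lemma~\ref{lem:mukai-wall-inequality}, since nefness of $D_{k,t}$ means exactly this. So it suffices to show $D^\vee\subseteq K$, or equivalently $K^\vee\subseteq D$, which is the previous step.

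\emph{Where the difficulty lies.} Thus everything reduces to the $3$-dimensional convex-geometric statement $K^\vee=D$. The obstacle is controlling the two ends of the chains. I expect to handle this by a slope parametrization analogous to $\psi$ in Lemma~\ref{lem:interval-covering}: intersect with the affine slice $e$-coefficient $=-1$ for divisors, equivalently $C_0$-pairing $1$. In that slice, $D$ is the convex hull of the points $D_{k,t}$, and $K^\vee$ is cut out by half-planes $(E_{k,t})_{[n]}\ge0$ together with boundedness coming from the positive cone. Consecutive half-plane boundaries meet exactly at the vertices $D_{k,t}$, which is the content of the last remark before the theorem. I would then verify that as $k\to\pm\infty$ the vertices converge to the two points where the line $\{C_0\text{-pairing}=1\}$ slices through the isotropic directions of $\overline{\Pos(X)}$. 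This would show that the polygon with vertices $D_{k,t}$ closes up to the full region $K^\vee$.
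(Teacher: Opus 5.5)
Your reduction is the same as the paper's. Your $K$ is the paper's $\mathcal C$ and your $D$ is the paper's $K$. The inclusions $K\subseteq\NEbar(X)$ and $D\subseteq\Nef(X)$ reduce everything to $K^\vee\subseteq D$, and the paper closes with the same chain $K\subseteq\NEbar(X)=\Nef(X)^\vee\subseteq D^\vee=K^{\vee\vee}=K$.

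The gap is in the step you flag as the crux, and the mechanism you propose there would fail. The slice $\{x\in N^1(X)_\RR : x\cdot C_0=1\}$ is not a compact section of $K^\vee$. This is because $C_0$ spans an extremal ray of $K$ rather than an interior point; the nef class $h_1^{[n]}$ vanishes on it. The isotropic rays $u_-=h_1+\alpha h_2$ and $u_+=h_1+\beta h_2$, at which the chain accumulates, satisfy $u_\pm\cdot C_0=0$. So they are parallel to your slice and never meet it, and the ``two points where the slice meets the isotropic directions'' do not exist. Concretely, the vertices $D_{k,0}=P_{k,0}-e$ all have $e$-coefficient $-1$, while their $h_1$-coefficients tend to $+\infty$. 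They run off to infinity in the slice rather than converging. The slice of $K^\vee$ is therefore an unbounded region whose recession cone is $K^\vee\cap C_0^\perp=\{L\in N^1(S_a)_\RR : L\cdot E_{k,t}\ge0\}=\operatorname{Cone}(u_-,u_+)$. There is no ``boundedness coming from the positive cone.'' The polygon never closes up, and $K^\vee\subseteq D$ cannot be read off from the vertices alone.

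What is actually needed is two things. First, (i): the only accumulation rays of the chain are $\RR_{\ge0}u_\pm$, so that $K^\vee=\overline{\operatorname{Cone}\langle D_{k,t},u_-,u_+\rangle}$. Second, (ii): $u_\pm\in D$, which lets the extra generators be absorbed.

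The paper gets (i) on the curve side, by listing the facets of $K$ in the compact slice $\{H\cdot R=1\}$ with $H$ ample. These are $\operatorname{Cone}((E_{k,t})_{[n]},(E_{k,t+1})_{[n]})$, supported by $D_{k,t}$, and $\operatorname{Cone}(C_0,u_\pm)$, supported by $u_\pm$. It gets (ii) from the explicit formula $P_{k,0}=(\mu-\nu/a)F_k+(\nu/a)F_{k+1}$, whose coefficients are positive and independent of $k$. Hence the rays of $D_{k,0}$ converge to $\RR_{\ge0}u_\pm$ as $k\to\pm\infty$.

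You could instead prove (ii) abstractly. Each $D_{k,t}\in\Pos(X)$ is BBF-orthogonal to pencil classes $E_{k,t}-\frac{N+\gamma_t}{2N}e$ whose rays tend to $u_\pm$. A nonzero class of $\overline{\Pos(X)}$ orthogonal to a nonzero isotropic class of $\overline{\Pos(X)}$ is proportional to it. Your earlier remark that the limits lie on $C_0^\perp$ points this way, but it contradicts your final step. If you slice by $\gamma\cdot x=1$ with $\gamma$ interior to $K$ (for instance $\gamma=C_0+(E_{0,0})_{[n]}+(E_{0,1})_{[n]}$), the normalized vertices do converge to the images of $u_\pm$. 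Your ``closing up'' argument then becomes valid once (ii) is established.
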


\begin{proof}
Using $E_{k,a}=E_{k+1,0}$, regard the classes $E_{k,t}$ as a single bi-infinite sequence. Write two consecutive classes as $E_{k,t}=xh_1+zh_2$ and $E_{k,t+1}=x'h_1+z'h_2$ with $0\le t\le a-1$. By the determinant computations above, $xz'-zx'=1$. The negative-index formula shows that $a_{2j-1}>0$ for every $j\in\ZZ$, so the $h_1$-coefficient of every $F_j$, and hence of every $E_{k,t}$, is positive. Thus $x,x'>0$, and it follows that $z/x<z'/x'$. Since $E_{k,t}$ lies between $F_k$ and $F_{k+1}$, the Binet formula in Proposition~\ref{prop:general-Fibonacci-identities} shows that these slopes tend to $\alpha$ and $\beta$ at the two ends of the sequence. For any $L=xh_1+zh_2\in\Pos(S_a)$, we have $x>0$ and $L^2=4x^2(1+a(z/x)-(z/x)^2)>0$, so $\alpha<z/x<\beta$. Hence $L$ lies in the cone generated by some consecutive pair $E_{k,t},E_{k,t+1}$.

The $h_1$-coefficients of $E_{k,t}$ tend to $+\infty$ at both ends. Set $u_-:=h_1+\alpha h_2$ and $u_+:=h_1+\beta h_2$; their rays are the two boundary rays of $\overline{\Pos(S_a)}$. We use the same notation for their images in $N^1(X)_\RR$ and, via the BBF identification, in $N_1(X)_\RR$.

Set
\[
\mathcal C:=\overline{\operatorname{Cone}\bigl\langle
C_0,\,(E_{k,t})_{[n]}
\mid k\in\ZZ,\;0\le t\le a-1
\bigr\rangle}.
\]
By construction, $\mathcal C\subseteq\NEbar(X)$.

We determine the facets of $\mathcal C$. Under the BBF identification, the pencil class $(E_{k,t})_{[n]}$ corresponds to
\[
E_{k,t}-\frac{N+\gamma_t}{2N}e.
\]
Together with the slope limits established above and the finiteness of the range of $t$, this shows that the only accumulation rays of the pencil classes are $\RR_{\ge0}u_-$ and $\RR_{\ge0}u_+$. In particular, both rays lie in $\mathcal C$.

Fix $k\in\ZZ$ and $0\le t\le a-1$. Among the pencil rays, the nef class $D_{k,t}$ is orthogonal precisely to $(E_{k,t})_{[n]}$ and $(E_{k,t+1})_{[n]}$, while $D_{k,t}\cdot C_0=1$. Since $D_{k,t}\in\Pos(X)$, it also pairs positively with $u_-$ and $u_+$. Since $u_-$ and $u_+$ are the only accumulation rays and $D_{k,t}$ pairs positively with both, the face of $\mathcal C$ cut out by $D_{k,t}$ is generated by these two pencil rays. Hence
\[
D_{k,t}^{\perp}\cap\mathcal C
=
\operatorname{Cone}\bigl((E_{k,t})_{[n]},(E_{k,t+1})_{[n]}\bigr).
\]
Since $\rho(X)=3$ and the two pencil rays are distinct, this face is a facet.

The classes $u_-$ and $u_+$ satisfy $u_\pm\cdot C_0=u_\pm^2=0$, and each pairs positively with every pencil ray and with the other accumulation ray. Hence they support the facets
\[
u_\pm^\perp\cap\mathcal C
=
\operatorname{Cone}(C_0,u_\pm).
\]

To see that these are all the facets, take the two-dimensional slice of $\mathcal C$ defined by $H\cdot R=1$ for an ample class $H$. This slice is the closed convex hull of the normalized points on the generating rays. The strict ordering of the $E_{k,t}$-slopes shows that the consecutive pencil facets form a boundary chain whose two ends converge to $u_-$ and $u_+$. The two facets above join these accumulation rays to $C_0$, completing the boundary of the slice. Hence these are all the facets of $\mathcal C$.

Dualizing this facet description gives
\[
\mathcal C^\vee
=
\overline{\operatorname{Cone}\bigl\langle
D_{k,t},u_-,u_+
\mid k\in\ZZ,\;0\le t\le a-1
\bigr\rangle}.
\]

It remains to show that the two accumulation rays are already contained in the closure of the cone generated by the $D_{k,t}$. Solving the defining equations for $P_{k,0}$ in the basis $(F_k,G_k)$ gives
\[
P_{k,0}=\mu F_k+\nu G_k,\qquad
\mu=\frac{N+a^2-a+3}{a^2+4},\qquad
\nu=\frac{a(N-1)+4}{2(a^2+4)}.
\]
Since $G_k=(F_{k+1}-F_k)/a$,
\[
P_{k,0}=\left(\mu-\frac{\nu}{a}\right)F_k+\frac{\nu}{a}F_{k+1}.
\]
The explicit formulas give $\nu>0$ and $a\mu>\nu$; indeed, $2(a^2+4)(a\mu-\nu)=aN+2a^3-2a^2+7a-4>0$. Hence both coefficients are positive.

Since the rays of $F_k$ and $F_{k+1}$ converge to $\RR_{\ge0}u_-$ as $k\to-\infty$ and to $\RR_{\ge0}u_+$ as $k\to+\infty$, the same holds for the rays of $P_{k,0}$. As the $h_1$-coefficients of $F_k$ and $F_{k+1}$ tend to $+\infty$ at both ends, the $h_1$-coefficient of $P_{k,0}$ does as well. Since $D_{k,0}=P_{k,0}-e$ and its $e$-coefficient is fixed, the same projective limits hold for the rays of $D_{k,0}$. Thus $u_-$ and $u_+$ lie in the closure of the cone generated by the $D_{k,t}$, so
\[
\mathcal C^\vee
=
\overline{\operatorname{Cone}\bigl\langle
D_{k,t}\mid k\in\ZZ,\;0\le t\le a-1
\bigr\rangle}
=:K.
\]

By Lemma~\ref{lem:mukai-wall-inequality}, every $D_{k,t}$ is nef, and hence $K\subseteq\Nef(X)$. Duality between the nef and Mori cones therefore gives
\[
\mathcal C
\subseteq\NEbar(X)=\Nef(X)^\vee
\subseteq K^\vee
=\mathcal C^{\vee\vee}
=\mathcal C.
\]
Thus $\NEbar(X)=\mathcal C$, and dualizing once more gives $\Nef(X)=K$.
\end{proof}

By the facet description above, the rays $\RR_{\ge0}D_{k,t}$ form an infinite collection of distinct extremal rays of $\Nef(X)$. Hence the nef cone is not polyhedral; in particular, $X$ is not a Mori dream space.

\begin{exa}
For $a=1$, $E_{k,0}=F_k=a_{2k-1}h_1+a_{2k}h_2$. Hence, for $n\ge3$, the nef cone of $S_1^{[n]}$ is cut out by the hyperplanes $(F_k)_{[n]}^\perp$ and $C_0^\perp$. The nef extremal rays indexed by $k\in\ZZ$ are generated by
\[
D_{k,0} = \frac{n+2}{10}(F_k+F_{k+1})-e,
\]
while the two accumulation rays are generated by $u_-$ and $u_+$.
\end{exa}

\bibliographystyle{plain}
\bibliography{bib}
\end{document}